\numberwithin{equation}{section}
\newtheorem{prop}{Proposition}[section]
\newtheorem{theo}[prop]{Theorem}
\newtheorem{lemm}[prop]{Lemma}
\newtheorem{rema}[prop]{Remark}
\def\and{\quad{\rm and}\quad}
\def\<{\langle}
\def\>{\rangle}
\title{On Weyl's embedding problem in Riemannian manifolds}
\author{
        Siyuan Lu
        }
\address{Department of Mathematics, Rutgers University, 110 Frelinghuysen Road, Piscataway, NJ 08854}
\email{siyuan.lu@math.rutgers.edu}
\thanks{Research of the author was supported in part by CSC fellowship and Schulich Graduate fellowship.}
\begin{document}

\begin{abstract}
We consider a priori estimates of Weyl's embedding problem of $(\mathbb{S}^2, g)$ in general $3$-dimensional Riemannian manifold $(N^3, \bar g)$. We establish interior $C^2$ estimate under natural geometric assumption. Together with a recent work by Li and Wang, we obtain an isometric embedding of $(\mathbb{S}^2,g)$ in Riemannian manifold. In addition, we reprove Weyl's embedding theorem in space form under the condition that $g\in C^2$ with $D^2g$ Dini continuous. 
\end{abstract}
\subjclass{53C20,  53C21, 58J05, 35J60}

\maketitle

\section{Introduction}

Weyl's embedding problem is a classic isometric embedding problem in differential geometry. It was raised by Weyl \cite{W} in 1916. The problem concerns the realization of $(\mathbb{S}^2,g)$ with positive Gauss curvature as a convex surface in $3$-dimensional Euclidean space. 

In the case that $g$ is analytic, it was solved by Lewy in \cite{L}. In the smooth case, Weyl's problem was fully resolved by Nirenberg in a milestone paper \cite{N}, under conditions that $g\in C^4$ and Gauss curvature $K_g>0$. In the degenerate case $K_g\ge 0$, Weyl's problem was studied by Guan and Li \cite{GL} and Hong and Zuily \cite{HZ}, see also \cite{I}.  

In the case of hyperbolic space $\mathbb{H}^3$, Weyl's problem was solved by Pogorelov \cite{P} under conditions that $g\in C^4$ and $K_g>-1$. The degenerate case $K_g\ge -1$ was studied by Lin and Wang \cite{WL}, see also early results by Chang and Xiao \cite{CX}. 

For general ambient spaces other than space form, Pogorelov established isometric embedding of a convex surface $(\mathbb{S}^2, g)$ into $(N,\bar{g})$ under certain conditions on sectional curvatures of the ambient space, see \cite{Pb} for details,  In a recent paper, Guan and the author \cite{GLu} obtained a global curvature bound if $(M^n, g)$ is isometrically embedded into warped product space and a class of ambient spaces. 

Instead of the global curvature estimates studied above, Heinz \cite{H1,H2} developed the interior $C^2$ estimate for the embedding. This allows him to solve the problem under the condition $g\in C^3$. This method was further modified by Schulz to relax the condition to $g\in C^{2,\alpha}$, see in \cite{S}. For general ambient spaces, Heinz \cite{H4} obtained a mean curvature estimate of a convex surface $(M^2,g)$ isometrically embedded into $(N,\bar{g})$ under conditions that $g\in C^3$ and the existence of a global convex function in ambient space, see also work by Dubrovin \cite{D}.

In this paper, we generalize Heinz's interior $C^2$ estimate to general Riemannian manifold. Note that a key difference for Heinz's interior $C^2$ estimate is that it's purely interior, does not depends on the position of the surface. This is totally different from global estimates above. The main contribution of our result is that the estimate will not depend on the position of the surface. This allows us to fulfill the closedness part of isometric embedding, see below for more details.

\medskip

Another motivation to study Weyl's problem comes from general relativity. This can be seen by the definitions of quasi-local mass of a bounded domain $(\Omega,\bar{g})$ in space time. Suppose $\Sigma=\partial \Omega$ is a topological sphere with positive Gauss curvature. In the time symmetric case, the Brown-York mass \cite{BY} is defined to be:
\begin{align*}
m_{BY}=\frac{1}{8\pi}\int_{\Sigma} (H_0-H) d\sigma,
\end{align*}
where $H$ is the mean curvature of $\Sigma$ in $(\Omega,\bar{g})$ and $H_0$ is the mean curvature of $\Sigma$ embedded into $\mathbb{R}^3$. While in the case of space time, the Liu-Yau quasi-local mass \cite{LY1, LY2} is defined to be
\begin{align*}
m_{LY}=\frac{1}{8\pi}\int_{\Sigma} (H_0-|H|) d\sigma,
\end{align*}
where $|H|$ is the Lorentzian norm of the mean curvature vector. The positivity of $m_{BY}$ was established by Shi and Tam \cite{ST2}, the positivity of $m_{LY}$ was established by Liu and Yau \cite{LY2}. By the above two definitions, we see that the solution of isometric embedding in $\mathbb{R}^3$ plays a crucial role. 

In the case that the Gauss curvature of $\Sigma$ ceases to be positive, Wang and Yau \cite{WY1} generalized the Liu-Yau quasi-local mass, using Pogorelov's solution to Weyl's problem in hyperbolic space $\mathbb H^3_{\kappa^2}$.  Their result was later furnished by Shi and Tam \cite{ST} to obtain the following inequality
\begin{align*}
\int_{\Sigma}(H_0-H)\cosh(\kappa r) d\sigma\geq 0,
\end{align*}
where $H_0$ is the mean curvature of $\Sigma$ embedded into $\mathbb{H}^3_{\kappa^2}$ and $r$ is the distance function in $\mathbb{H}^3_{\kappa^2}$. It turns out that this inequality plays an important role in our mean curvature estimate. Moreover, if we assume $\Sigma$ can be isometrically embedded into Schwarzschild manifold, then Miao and the author \cite{LuM} were able to establish a quasi-local type inequality. In space time case, Wang and Yau \cite{WY} further studied a new quasi-local mass using isometric embedding into Minkowski spacetime. 

\medskip

We consider a priori estimates of isometrically embedded surface $(\mathbb{S}^2,g)$ in $3$-dimensional Riemannian manifold $(N^3,\bar{g})$. Recall that the second order estimate in \cite{GLu} depends on the position of the surface in general. In the case that $(N^3,\bar{g})$ is a space form, it fulfills the a priori estimates as $C^0$ estimate follows directly from Bonnet's theorem. In general, since a priori we do not know the position of the surface, uniform $C^2$ estimate does not follow. On the other hand, in a recent work of Li and Wang \cite{GLW}, a uniform $C^0$ estimate was obtained for strictly convex surfaces. Thus to establish the full a priori estimates, we need to establish a new mean curvature estimate which is independent of the position of $(\mathbb{S}^2,g)$ in $(N,\bar{g})$. 

We adopt Heinz's interior $C^2$ estimate to solve the problem. There are two major advantages of our proof. The first is that our mean curvature estimate is indeed independent of the position of surface, thus fulfills the unifom $C^2$ estimates. The second is that we only require the metric $g,\bar{g}\in C^3$, while both $g, \bar{g}\in C^4$ is required in \cite{GLu}. 

A key observation in our proof is a bound of the total mean curvature. In Euclidean space, the total mean curvature can be bounded via the classic Minkowski formula, which was carried out by Heinz in \cite{H3}, see also \cite{S2}. Using the same idea, we are able to employ the analogous Minkowski formula to bound the total mean curvature in the case of space form. However, for general Riemannian manifold, in lack of conformal Killing vector field, the Minkowski formula no longer exists. In order to bound the total mean curvature, we use the positivity of hyperbolic version of quasi-local mass, which is proved by Wang and Yau \cite{WY1} and Shi and Tam \cite{ST}. The total mean curvature is thus bounded due to the estimates of isometric embedding in hyperbolic space.

\medskip

Before we state our theorem, let us fix some notations. Let $(\mathbb{S}^2,g)$ be an isometrically embedded surface in an ambient space $(N^3, \bar g)$. Denote $Ric$ and $\bar{Ric}$ the Ricci curvature tensors of $(\mathbb{S}^2,g)$  and ($N,\bar{g}$) respectively,  and denote $R=Tr(Ric)$  and $\bar{R}=Tr(\bar{Ric})$ the scalar curvatures of $(\mathbb{S}^2,g)$ and $(N,\bar{g})$ respectively. Let $\nu$ be the unit outer normal, and $h$ the corresponding second fundamental form, by Gauss equation, we have
\begin{align}\label{G1}
\det(h)=\frac{R-\bar{R}}{2}+\bar{Ric}(\nu,\nu).
\end{align}

We now state our main results.

\begin{theo}\label{thm-1}
Let $(N^3,\bar{g})$ be a 3-dimensional Riemannian manifold with possibly finite many boundary components, which are minimal surfaces. Suppose $\Sigma=(\mathbb{S}^2,g)$ is isometrically embedded into $(N^3,\bar{g})$, let $X$ be the embedding, assume that 
\begin{align}\label{ellip}
R(x)-\bar{R}(X(x))+2\bar{Ric}_{X(x)}(\nu,\nu)\geq C_0>0,
\end{align}
$\forall x\in \Sigma$. Assume further that $\bar{R}\geq -6\kappa^2$ for some costant $\kappa>0$. Then we have
\begin{align*}
\|D^2X\|_{C^{k,\mu}}\leq C,
\end{align*}
for any $0<\mu<1$, where $C$ depends only on $C_0$, $\kappa$, $\|g\|_{C^{k+3}}$ and $\|\bar{g}\|_{C^{k+3}}$, but not on the position of $\Sigma$ in $N$, here $k\geq 0$ is an integer.
\end{theo}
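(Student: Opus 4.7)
The plan is to reduce Theorem \ref{thm-1} to Heinz's interior $C^2$ estimate, adapted to the Riemannian setting and fed by a uniform bound on the total mean curvature $\int_\Sigma H\,d\sigma$ that does not depend on the position of $\Sigma$ in $N$. By the Gauss equation \eqref{G1}, the ellipticity hypothesis \eqref{ellip} is equivalent to $2\det h\geq C_0>0$, so $\Sigma$ is strictly convex and in particular $H>0$ pointwise. Invoking the $C^0$ estimate of Li--Wang \cite{GLW} then confines $\Sigma$ to a fixed compact region of $N$, so from this point on the ambient metric can be treated with uniform $C^{k+3}$ bounds.

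The first main step is to bound $\int_\Sigma H\,d\sigma$ uniformly. Since $|\bar R|$ and $|\bar{Ric}|$ are controlled by $\|\bar g\|_{C^2}$ and $\det h\geq C_0/2$, the intrinsic Gauss curvature $K_g=R/2$ admits a uniform lower bound, and after possibly enlarging $\kappa$ one may arrange $K_g>-\kappa^2$ while still keeping $\bar R\geq -6\kappa^2$. Pogorelov's theorem then produces a convex isometric embedding of $(\mathbb{S}^2,g)$ into $\mathbb{H}^3_{\kappa^2}$, whose mean curvature $H_0$ and whose hyperbolic distance $r$ to a fixed center are bounded in terms of $\|g\|_{C^3}$, $\kappa$, and the intrinsic diameter of $g$. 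Since $\Sigma$ encloses a domain $\Omega\subset N$ whose remaining boundary components are minimal and $\bar R|_\Omega\geq -6\kappa^2$, the hyperbolic Shi--Tam/Wang--Yau positivity \cite{WY1,ST} gives
\[
\int_\Sigma H\cosh(\kappa r)\,d\sigma \;\leq\; \int_\Sigma H_0\cosh(\kappa r)\,d\sigma \;\leq\; C,
\]
and combining with $H>0$ and $\cosh(\kappa r)\geq 1$ yields the desired $\int_\Sigma H\,d\sigma\leq C$.

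The second and main technical step is the Heinz-type pointwise estimate for $H$. Near any point $p\in\Sigma$ I would write $\Sigma$ as a normal graph $X=\exp_p(x+u(x)\nu_0)$ in geodesic coordinates of $(N,\bar g)$, so that \eqref{G1} becomes a Monge--Ampere equation $\det(u_{ij}+a_{ij}(x,u,Du))=f(x,u,Du)$ with $f\geq C_0/2$ and coefficients controlled by $\|\bar g\|_{C^3}$. Following Heinz \cite{H1,H2,H3}, one then runs a maximum-principle argument on an auxiliary function combining $H$, a correction absorbing ambient-curvature terms, and a convex localizing cutoff, and integrates the resulting differential inequality to control $\sup_K H$ by $\int_\Sigma H\,d\sigma$ together with the $C^3$ data of $g$ and $\bar g$. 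The hard part is that, unlike in the Euclidean or space-form case, differentiating \eqref{G1} injects tangential derivatives of $\bar{Ric}(\nu,\nu)$ and commutator terms involving the Riemann tensor of $\bar g$ into the inequality; absorbing these requires the ellipticity margin $C_0$ together with the $C^3$ regularity of $\bar g$, while the minimality of $\partial N$ is precisely what prevents boundary contributions from spoiling the integration.

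Once $\|H\|_{L^\infty}\leq C$ is combined with $\det h\geq C_0/2$, the principal curvatures of $\Sigma$ are two-sided bounded, so \eqref{G1} is uniformly elliptic and concave in $h$. Evans--Krylov then yields an interior $C^{2,\alpha}$ estimate for $X$, and Schauder bootstrap applied to the Codazzi--Gauss system upgrades this to the claimed $C^{k+2,\mu}$ bound under the full $C^{k+3}$ hypothesis on $g$ and $\bar g$.
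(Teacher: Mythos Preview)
Your overall architecture is right --- bound $\int_\Sigma H\,d\sigma$ via hyperbolic quasi-local mass, feed this into a Heinz-type interior estimate for $H$, then bootstrap --- and your treatment of the total mean curvature step matches the paper. But two points need correction.

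First, the appeal to the Li--Wang $C^0$ estimate at the outset is misplaced and potentially circular. The whole point of Theorem~\ref{thm-1} (and what the paper stresses repeatedly) is that the curvature bound is obtained \emph{before} and \emph{independently of} any knowledge of where $\Sigma$ sits in $N$; indeed in Section~5 the $C^0$ bound is deduced \emph{from} the position-free curvature estimate via Theorem~\ref{linearized theorem}, whose hypothesis already requires two-sided principal-curvature bounds. The constant $\|\bar g\|_{C^{k+3}}$ in the statement is a global hypothesis, so no confinement step is needed. Drop this paragraph.

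Second, and more substantively, your description of the ``Heinz estimate'' is not Heinz's method and is too vague to carry the argument. Heinz's interior $C^2$ estimate (both in \cite{H1,H2,H3} and in the paper's Proposition~\ref{key prop}) is not a maximum-principle argument on an auxiliary function. One introduces isothermal parameters $(u,v)$ for the positive-definite second-fundamental-form metric $h_{ij}\,dx^idx^j$ on a geodesic disk; the map $z=(x,y)\mapsto w=(u,v)$ then satisfies a quasilinear elliptic system of Heinz--Lewy type \eqref{Heinz-Lewy}, where the right-hand side is generated by the Codazzi equation and by differentiating the Gauss relation $\det h = D/\det g$. The structural constraints $\tilde h_1=h_3=0$, $h_1=\tilde h_2$, $h_2=\tilde h_3$ (assumption~(3)) are verified from these identities, the energy bound $\int_B|Dz|^2\le C\int_\Sigma H$ comes from the conformal relations \eqref{conformal-3}, and Lemma~\ref{Heinz-Lewy theorem} then gives a lower bound on the Jacobian $J(x,y)$, which via \eqref{conformal-3} converts to $|h_{ij}|\le C$. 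The ambient curvature enters only through the explicit bounded coefficients $h_k,\tilde h_k$, and the minimality of $\partial N$ plays no role whatsoever in this step --- it is used solely in Lemma~\ref{quasi-2} to get the quasi-local mass inequality, exactly as you have it in your first main step.

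Once $\|h_{ij}\|_{C^\mu}$ is in hand, the paper writes $\Sigma$ locally as a radial graph in geodesic polar coordinates and applies Schauder theory to the Monge--Amp\`ere equation; your Evans--Krylov plus Schauder route would also work here, so that part is fine.
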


By Gauss equation (\ref{G1}), assumption (\ref{ellip}) implies $\det(h)\geq C_0$, which is the ellipticity condition in the view of partial differential equations. In particular, the mean curvature estimate corresponds to the case $k=0$, we refer to Theorem \ref{thm-curvature} in section 4 for detail.

\medskip

As mentioned above, the mean curvature estimate above is the key for closedness part of isometric embedding theorem. Together with the recent work by Li and Wang \cite{GLW} (the openness part of the isometric embedding problem), we can prove an isometric embedding in Riemannian manifold. In particular, we are able to prove an isometric embedding in Schwarzschild manifold outside the horizon. 

\begin{theo}\label{thm-isometric}
Suppose $(N^3,\bar{g})$ is a $C^3$ Riemannian manifold with possibly finite many boundary components, which are minimal surfaces. Suppose that $N$ is diffeomorphic to $\mathbb{R}^3$ minus a compact set and the scalar curvature $\bar{R}$ is bounded below, let $g$ be a $C^3$ metric on $\mathbb{S}^2$ such that
\begin{align}\label{curvature con thm}
R(x)\geq \bar{R}(X)-2\inf\{\bar{Ric}(X)(\xi,\xi)|\xi\in T_XN,|\xi|=1\}+ C_0,
\end{align}
$\forall x\in (\mathbb{S}^2,g)$ and $\forall X\in (N,\bar{g})$, where $C_0>0$ is an arbitrary constant.  Then $(\mathbb{S}^2,g)$ can be isometrically embedded into $(N,\bar{g})$ as a $C^{2,\mu}$ surface which is null-homologous to the boundary component, for any $0<\mu<1$. Morever, if $g,\bar{g}\in C^k$, $k\geq 3$, then the embedding is $C^{k-1,\mu}$. In particular, if both $g,\bar{g}$ are smooth, then the embedding is also smooth.
\end{theo}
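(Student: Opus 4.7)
The plan is to prove Theorem \ref{thm-isometric} via the continuity method. We set up a family $\{g_t\}_{t \in [0,1]}$ of metrics on $\mathbb{S}^2$ connecting a reference metric $g_0$ that is known to be isometrically embeddable as a null-homologous surface, to the given metric $g_1 = g$, and show that
\[
T := \{t \in [0,1] : (\mathbb{S}^2, g_t) \text{ admits a null-homologous isometric embedding into } (N, \bar g)\}
\]
is nonempty, open, and closed in $[0,1]$. For the base point I would exploit that $N$ is diffeomorphic to $\mathbb{R}^3 \setminus K$ to pick a small geodesic sphere far from the boundary components and set $g_0$ to be its induced metric; because small geodesic spheres have very large intrinsic curvature, (\ref{curvature con thm}) is satisfied at $t=0$ with an enormous margin. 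Along a suitably chosen path (for instance a convex combination, or a conformal rescaling path) one arranges that (\ref{curvature con thm}) continues to hold uniformly in $t$ with a uniform positive constant.

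Openness of $T$ is supplied by the implicit function theorem argument of Li--Wang \cite{GLW}, which is valid in the strictly convex regime ensured by (\ref{curvature con thm}). Closedness is where the core work lies. Given $t_k \to t_\infty$ in $T$ with associated embeddings $X_{t_k}\colon (\mathbb{S}^2, g_{t_k}) \to (N, \bar g)$, I extract a $C^{2,\mu'}$-convergent subsequence. The uniform $C^{2,\mu}$ bound on $X_{t_k}$ is furnished by Theorem \ref{thm-1} (the assumed lower bound on $\bar R$ supplies the hypothesis $\bar R \geq -6\kappa^2$), but it pins down only the shape of the surface and not its position in $N$. To control position I invoke the uniform $C^0$ estimate of Li--Wang \cite{GLW}, which confines $X_{t_k}(\mathbb{S}^2)$ to a fixed compact subset of $N$ using the null-homology condition and the asymptotic structure of $\bar g$. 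Together, the shape control from Theorem \ref{thm-1} and the position control from \cite{GLW} yield subsequential $C^{2,\mu'}$ convergence to a limit embedding $X_{t_\infty}$, so $t_\infty \in T$.

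Higher regularity is a bootstrap: Theorem \ref{thm-1} applied with larger $k$ gives $X \in C^{k-1,\mu}$ as soon as $g, \bar g \in C^k$ for $k \geq 3$, and smoothness in the smooth case follows by letting $k \to \infty$. The main obstacle, as I see it, is twofold. First, constructing the continuity path $g_t$ so that (\ref{curvature con thm}) is preserved uniformly in $t$ is a nonconvex condition on the path in the space of metrics and requires a careful choice of both $g_0$ and of the interpolation, since the scalar curvature does not interact linearly with a convex combination. Second, one must match the hypothesis of the Li--Wang $C^0$ estimate to the null-homology setting so that position control holds uniformly along the entire path. Without such position control, the position-independent $C^2$ bounds of Theorem \ref{thm-1} alone would give only convergence of second fundamental forms modulo ambient motions, not of the embeddings themselves.
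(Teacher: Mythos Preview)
Your overall architecture matches the paper: continuity method, a small geodesic sphere far from the boundary as the starting embedding, openness via Li--Wang \cite{GLW}, and closedness powered by the position-independent estimate of Theorem \ref{thm-1}. Two places differ in substance.

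First, the homotopy path. You correctly flag that a convex combination or naive conformal path need not preserve (\ref{curvature con thm}) and leave this as an obstacle. The paper resolves it concretely: run the normalized Ricci flow on $g_0$ and on $g$ separately until each reaches constant scalar curvature, and bridge the two round metrics by a one-parameter family of geodesic spheres in hyperbolic space. Along normalized Ricci flow on $\mathbb{S}^2$ the minimum of the scalar curvature is nondecreasing, so (\ref{curvature con thm}) persists with a uniform constant; on the middle segment the metrics are round and the condition is trivial. This three-piece path is the missing ingredient in your sketch.

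Second, the mechanism for $C^0$ control in the closedness step. You propose a subsequence/compactness argument, invoking a Li--Wang $C^0$ estimate that confines $X_{t_k}(\mathbb{S}^2)$ via ``the null-homology condition and the asymptotic structure of $\bar g$''. That is not the form the Li--Wang input takes here. The paper instead bounds the variation field $\tau_t$ solving the linearized equation (Theorem \ref{linearized theorem}) and writes
\[
X(x)=X_0(x)+\int_0^1 \tau_t(x)\,dt.
\]
The crucial point is that Theorem \ref{linearized theorem} needs uniform upper and lower bounds on the principal curvatures, and this is precisely what Theorem \ref{thm-1} supplies \emph{independently of the position of $X_t$ in $N$}; that is why the linearized problem stays uniformly elliptic along the whole path and $\|\tau_t\|_{C^0}$ stays bounded. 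So the position-independence of Theorem \ref{thm-1} is not used to feed a compactness argument, but to guarantee uniform ellipticity of the linearized operator, which in turn yields the $C^0$ bound by integration. Your compactness route is not wrong in spirit, but you would need to identify a Li--Wang statement that bounds $X$ directly rather than $\tau$; the one quoted in the paper does the latter.
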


\medskip

In view of Theorem \ref{thm-1} and Theorem \ref{thm-isometric}, we require $g\in C^3$ to establish $C^2$ embedding. Recall that in the case of Euclidean space, Schulz was able to establish the embedding under the condition that $g\in C^{2,\alpha}$. A natural question is what's the optimal regularity condition to ensure $C^2$ embedding. In general, a weaker regularity condition to ensure corresponding regularity is the Dini continuity for elliptic partial differential equations.  We say a function is Dini continuous if the moduli of continuity $\omega(r)$ satisfies
\begin{align*}
\int_0^1\frac{\omega(r)}{r}dr<\infty.
\end{align*}
It's easy to see that H\"older continuity implies Dini continuity. 

In this direction, we are able to weaken the condition to $g\in C^2$ with $D^2g$ Dini continuous in the case of space form.

\begin{theo}\label{thm-hyperbolic}
Let $g$ be a $C^2$ metric on $\mathbb{S}^2$, with $D^2g$ Dini continuous. Suppose the scalar curvature of $(\mathbb{S}^2,g)$ satisfies
\begin{align*}
R>2K,
\end{align*}
then $(\mathbb{S}^2,g)$ can be isometrically embedded into $N_K^3$ as a $C^2$ surface $\Sigma$, where $N_K^3$ denotes $3$-dimensional space form with constant sectional curvature $K$. 
\end{theo}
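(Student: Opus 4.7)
The plan is to approximate the given $C^2$ metric by smooth metrics, apply the classical Weyl embedding theorem in the space form $N_K^3$ to each, and pass to the limit through a uniform interior $C^2$ bound that depends only on the Dini modulus of $D^2 g$. First I would mollify $g$ in charts of a finite smooth atlas on $\mathbb{S}^2$ to produce a sequence $g_j\in C^\infty$ with $g_j\to g$ in $C^2$, with $R_{g_j}>2K$ uniformly, and with the modulus of continuity of $D^2 g_j$ dominated by that of $D^2 g$ up to a fixed constant. The classical Weyl embedding theorem (\cite{N} for $K=0$, \cite{P} for $K<0$, with a parallel argument for $K>0$) then furnishes a smooth strictly convex isometric embedding $X_j\colon(\mathbb{S}^2,g_j)\to N_K^3$, unique up to ambient isometry.

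The crux is a uniform $C^2$ estimate on $X_j$, together with a uniform modulus of continuity of $D^2 X_j$, depending only on $C_0:=\inf_{\mathbb{S}^2}(R_g-2K)>0$, $\|g\|_{C^2}$, and the Dini modulus $\omega$ of $D^2 g$. Locally writing the embedded surface as a graph $u$ over a tangent totally geodesic plane in $N_K^3$, the Gauss equation (\ref{G1}) yields a Monge-Amp\`ere type equation
\begin{align*}
\det\bigl(D^2 u-S(x,u,Du)\bigr)=f(x,u,Du)>0,
\end{align*}
whose metric data enter only through $g$ and its first derivatives. I would first invoke the Heinz-Schulz interior $C^2$ estimate \cite{H1,H2,S} to bound $\|D^2 X_j\|_{C^0}$, and then upgrade this bound to a purely Dini-continuous one by a frozen-coefficient argument: on small balls compare $u$ with the solution of the constant-coefficient Monge-Amp\`ere equation obtained by freezing the coefficients at the ball's center, sum the resulting oscillation decay using $\int_0^1\omega(r)/r\,dr<\infty$ in place of a H\"older decay $r^\alpha$, and conclude continuity of $D^2 u$ with a quantitative modulus controlled by $\omega$. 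This is the analogue for Monge-Amp\`ere of the classical Dini regularity theorem for divergence-form linear elliptic equations.

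Granted the uniform $C^2$ bound together with its uniform modulus of continuity, Arzel\`a-Ascoli yields a subsequence $X_j\to X$ in $C^2(\mathbb{S}^2)$; the limit $X$ is a $C^2$ strictly convex isometric embedding of $(\mathbb{S}^2,g)$ into $N_K^3$ because $\det h_j\geq C_0/2>0$ passes to the limit. The main obstacle lies in the Dini upgrade of the second step: Schulz's Campanato iteration is set intrinsically at the $C^{2,\alpha}$ level, and adapting it to the Dini scale requires quantitative control of the linearized Monge-Amp\`ere problems with oscillation summed against the Dini modulus rather than a geometric series with H\"older ratio, together with a careful accounting of the Christoffel-symbol contributions that distinguish the curved space form from the Euclidean case.
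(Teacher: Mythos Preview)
Your approximation-and-limit strategy is reasonable in outline, but the step you yourself flag as the ``main obstacle'' is a genuine gap, and it is exactly where the paper's argument takes a different route. First, invoking the Heinz--Schulz interior $C^2$ estimate for each smooth $g_j$ does not give a uniform bound: that estimate, as stated for $g\in C^{2,\alpha}$, depends on $\|g_j\|_{C^{2,\alpha}}$, which need not stay bounded when $g_j\to g$ only in $C^2$ with a Dini second derivative. So the $C^2$ bound and the modulus-of-continuity control cannot be decoupled; both must come directly from the Dini hypothesis. Second, your proposed ``Dini upgrade'' by a frozen-coefficient Campanato iteration for the Monge--Amp\`ere equation is not available in the literature and faces the familiar circularity that the linearized coefficients depend on $D^2u$ itself; Schulz's method is in fact \emph{not} a Monge--Amp\`ere Campanato scheme but the Heinz--Lewy conformal reformulation. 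You also omit the bound on $\int_\Sigma H\,d\sigma$, which is the input that makes any Heinz-type interior estimate go.

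The paper avoids the Monge--Amp\`ere route entirely at this stage. It introduces isothermal parameters $(u,v)$ for the \emph{second} fundamental form, so that $h_{ij}/\sqrt{D}$ is conformal to $\delta_{ij}$ with $D=\tfrac12(R-2K)\det(g_{ij})$. In a space form the Codazzi equations make the right-hand side of the Heinz--Lewy system vanish, and the coordinate functions $x,y$ satisfy the \emph{linear} divergence-form equations $\partial_\alpha(\sqrt{D}\,\partial_\alpha x)=0$, $\partial_\alpha(\sqrt{D}\,\partial_\alpha y)=0$. The point is that $D$ is purely intrinsic, built from $g$ and $D^2g$, hence Dini continuous by hypothesis. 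The required $C^1$ estimate for this system is then supplied by Y.~Y.~Li's recent Dini regularity result for divergence-form equations~\cite{Li}, and the Jacobian lower bound (Lemma~\ref{Heinz-Lewy Dini}, proved in the Appendix) converts this into a bound on $h_{ij}$ with controlled modulus via the conformal relations. The total mean curvature bound feeding the $W^{1,2}$ control comes from the Minkowski formula in the space form. In short, the missing idea is the Heinz--Lewy isothermal trick, which linearizes the problem and places the Dini assumption exactly on the coefficient of a divergence-form operator where the needed regularity theory exists.
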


By $C^2$ surface, we mean that the embedding $X$ is $C^2$, with moduli of continuity of $D^2X$ under control. This is different from $C^{1,1}$ embedding. We remark that this theorem is new even in Euclidean space. 

\medskip

\medskip

The organization of the paper is as follows. In the next section, we state some lemmas concerning the classic isothermal parameters, quasi-local mass and Heinz's interior $C^2$ estimate. In section 3, we consider the isometric embedding in space form. In section 4, we consider the a priori estimates of the embedding and prove Theorem \ref{thm-1}. We study the isometric embedding into Riemannian manifold in section 5. The proof of technical Lemma \ref{Heinz-Lewy Dini} will be given in the Appendix.

\section{Preliminary}
Let us list some basic formulas for isometrically immersed surface $(M^2,g)$ in an ambient Riemannian manifold $(N^3, \bar g)$. Denote $R_{ijkl}$ and $\bar{R}_{abcd}$ to be the Riemannian curvatures of $M$ and $N$ respectively. For a fixed local frame $(e_1, e_2)$ on $M$, let $\nu$ be a normal vector field of $M$, and let $h=(h_{ij})$ be the second fundamental form of $M$ with respect to $\nu$. We have the Gauss equation and Codazzi equation,
\begin{align}\label{Gauss}
R_{ijkl}=\bar{R}_{ijkl}+h_{ik}h_{jl}-h_{il}h_{jk}, \quad (Gauss)
\end{align}
\begin{align}\label{Codazzi}
\nabla_k h_{ij}=\nabla_j h_{ik}+\bar{R}_{\nu ijk} .\quad (Codazzi)
\end{align}
We use the convention that $R_{ijij}$ denotes the sectional curvature.

Taking trace of (\ref{Gauss}), we have
\begin{align}\label{G}
\det(h^i_j)=\frac{R-\bar{R}}{2}+\bar{Ric}(\nu,\nu),
\end{align}
where $h^i_j=g^{ik}h_{kj}$ is the Weingarten tensor.

\medskip

We first state a classic result about the existence of isothermal parameters of surface by Hartman and Wintner, see in \cite{HW}.
\begin{lemm}\label{isothermal}
Let $\Sigma$ be a surface and $B_r$ a domain in $\Sigma$. Let $(l_{ij})$ be a $2\times 2$, positive definite symmetric matrix in $B_r$, such that $l_{ij}$ is Dini continuous in $B_r$. Then there exist isothermal parameters $(u,v)$ such that 
\begin{align*}
ds^2=l_{11}dx^2+2l_{12}dxdy+l_{22}dy^2=\Lambda(du^2+dv^2),\quad \Lambda\neq 0,
\end{align*}
in $B_r$ for $r$ sufficiently small. 
\end{lemm}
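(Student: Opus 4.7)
The strategy is to reduce the existence of isothermal parameters to solving a Beltrami equation locally in a small disk, and then to exploit the fact that Dini continuity of the coefficients is precisely the borderline regularity needed to produce a $C^1$ solution with nonvanishing Jacobian.

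Setting $z = x + iy$ and using $dx = \tfrac{1}{2}(dz + d\bar z)$, $dy = \tfrac{1}{2i}(dz - d\bar z)$, a direct algebraic rewriting of the given quadratic form yields
\begin{equation*}
l_{11}\,dx^2 + 2 l_{12}\,dx\,dy + l_{22}\,dy^2 = \lambda\,\bigl|dz + \mu\,d\bar z\bigr|^2,
\end{equation*}
where the conformal factor $\lambda > 0$ and the complex dilatation $\mu$ (with $|\mu| < 1$, by positivity of $(l_{ij})$) are explicit algebraic functions of the $l_{ij}$ and so inherit their Dini continuity. A map $w = u + iv$ then produces isothermal parameters with $ds^2 = \lambda\,|w_z|^{-2}(du^2 + dv^2)$, provided $w$ solves the Beltrami equation $w_{\bar z} = \mu\,w_z$ and $w_z$ is nonvanishing. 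After a preliminary linear change of coordinates I may assume $\mu(0) = 0$, and by shrinking $r$ I may arrange that $\|\mu\|_{L^\infty(B_r)}$ is as small as desired.

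Next I would solve the Beltrami equation via the Cauchy--Pompeiu representation. Looking for $w$ in the form $w(z) = z + T\phi(z)$, where
\begin{equation*}
T\phi(z) = -\frac{1}{\pi}\int_{B_r}\frac{\phi(\zeta)}{\zeta - z}\,dA(\zeta)
\end{equation*}
is the Cauchy transform, one has $(T\phi)_{\bar z} = \phi$ and $(T\phi)_z = S\phi$, with $S$ the Beurling--Ahlfors singular integral. The Beltrami equation then becomes the fixed-point problem $\phi = \mu(1 + S\phi)$. Because $\|S\|_{L^p \to L^p} \to 1$ as $p \to 2$, the smallness of $\mu$ produces a unique solution $\phi \in L^p$ for some $p > 2$ via a contraction mapping argument.

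Finally I would upgrade regularity using the Dini hypothesis: $T$ maps Dini continuous functions into $C^1$, while $S$ maps Dini continuous functions into continuous ones (with modulus of continuity degraded only by a logarithmic factor, which remains Dini continuous). Bootstrapping the fixed-point equation shows that $\phi$ is Dini continuous, whence $w \in C^1(B_r)$ with $w_z = 1 + S\phi$ close to $1$, and therefore nonvanishing. The main obstacle is precisely this borderline singular-integral estimate: Dini continuity is the weakest modulus of continuity under which the Beurling--Ahlfors transform is guaranteed to return a continuous function rather than a merely $\mathrm{BMO}$ one, and a careful quantitative version of this fact is what drives the closing bootstrap and is where the Dini hypothesis on $(l_{ij})$ is genuinely needed.
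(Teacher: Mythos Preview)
The paper does not prove this lemma at all; it is quoted as a classical result of Hartman and Wintner \cite{HW} and used as a black box. So there is nothing in the paper to compare your argument against beyond the citation.

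Your Beltrami-equation approach is the standard modern route and is correct in outline: the rewriting $ds^2 = \lambda|dz + \mu\,d\bar z|^2$, the normalization $\mu(0)=0$, the Cauchy--Pompeiu ansatz $w = z + T\phi$, and the contraction in $L^p$ for $p$ close to $2$ are all fine. The step I would flag is the bootstrap from $\phi\in L^p$ to $\phi$ Dini continuous. The identity $\phi = \mu(1+S\phi)$ does not by itself lift $\phi$ out of $L^p$: multiplying an $L^p$ function by a Dini-continuous $\mu$ does not produce a Dini-continuous function, so one cannot simply iterate. Moreover, a single application of $S$ to a function with modulus $\omega$ typically returns a modulus of order $\int_0^r \omega(t)/t\,dt$, which need not satisfy the Dini condition again (that would require the stronger log--Dini condition $\int_0^1 \omega(t)\log(1/t)/t\,dt<\infty$), so running the contraction in a fixed Dini space also does not close naively. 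What does work is either running the fixed point from the outset in a space built on the transformed modulus and using the smallness of $\|\mu\|_\infty$ on $B_r$ to absorb the degradation, or---closer to what Hartman and Wintner actually do---a frozen-coefficient comparison that avoids iterating $S$ altogether. You have correctly located the crux in the borderline mapping property of the Beurling--Ahlfors transform; just be aware that closing the loop takes more than the one-line bootstrap you indicate.
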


\medskip

We also need the following lemma concerning the positivity of quasi-local mass, which was proved by Wang and Yau \cite{WY1} and Shi and Tam \cite{ST}.

\begin{lemm}\label{quasi}
Let $(\Omega,\bar{g})$ be a $3$-dimensional Riemannian manifold with scalar curvature $\bar{R}\geq -6\kappa^2$ for some $\kappa>0$. Assume that $\Sigma=\partial \Omega$ is a topological sphere with scalar curvature $R> -2\kappa^2$ and positive mean curvature $H$. Then $\Sigma$ can be isometrically embedded into $\mathbb{H}_{\kappa^2}^3$, let $H_0$ be the corresponding mean curvature, then we have
\begin{align*}
\int_\Sigma \left(H_0-H\right)\cosh (\kappa r) d\sigma \geq 0,
\end{align*}
where $r$ is the distance function from the origin in $\mathbb{H}_{\kappa^2}^3$.
\end{lemm}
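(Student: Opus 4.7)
The plan is to reduce the inequality to the positive mass theorem for asymptotically hyperbolic manifolds, following the Shi--Tam/Wang--Yau strategy. On a $2$-sphere one has $R = 2K$, so the hypothesis $R > -2\kappa^2$ is equivalent to $K > -\kappa^2$. Hence Pogorelov's isometric embedding theorem applied in $\mathbb{H}^3_{\kappa^2}$ produces a strictly convex isometric embedding of $\Sigma$, with well-defined mean curvature $H_0 > 0$; this is the reference quantity appearing on the left-hand side.

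Next I would foliate the exterior of the embedded copy of $\Sigma$ in $\mathbb{H}^3_{\kappa^2}$ by its equidistant surfaces $\Sigma_s$, which decomposes the hyperbolic metric as $g_{\mathbb{H}} = ds^2 + g_s$ with $g_s$ the induced metric on $\Sigma_s$. On this exterior I would adapt Bartnik's quasi-spherical construction to the hyperbolic background: find a positive function $u$ on $[0,\infty)\times \Sigma$ such that the modified metric
\begin{align*}
\tilde g = u^2\, ds^2 + g_s
\end{align*}
has scalar curvature identically $-6\kappa^2$, with boundary value $u|_{s=0} = H_0/H$ chosen so that the mean curvature of $\Sigma$ computed from the $\tilde g$ side equals the original $H$. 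The defining equation is a quasilinear parabolic PDE in $s$; the analytic heart is global existence of $u > 0$ and the sharp asymptotics $u \to 1$ as $s \to \infty$, which come from Bartnik-type barrier, maximum-principle and energy arguments adapted to the hyperbolic foliation.

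Finally, I would glue $(\Omega, \bar g)$ to $(\mathbb{H}^3_{\kappa^2}\setminus \Int(\Sigma), \tilde g)$ along $\Sigma$. By construction the induced metrics agree and the mean curvatures from the two sides match, so the glued manifold $\tilde M$ is $C^0$ and asymptotically hyperbolic, with scalar curvature $\geq -6\kappa^2$ in the distributional sense. The function $\cosh(\kappa r)$ is a static potential (lapse) in $\mathbb{H}^3_{\kappa^2}$, and the main geometric computation is the monotonicity
\begin{align*}
\frac{d}{ds}\int_{\Sigma_s}\bigl(H_{\mathbb{H}} - H_{\tilde g}\bigr)\cosh(\kappa r)\, d\sigma_s \leq 0
\end{align*}
along the foliation of the exterior, where $H_{\mathbb{H}}$ and $H_{\tilde g}$ are the mean curvatures of $\Sigma_s$ in $g_{\mathbb{H}}$ and $\tilde g$ respectively. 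Evaluating at $s=0$ recovers $\int_\Sigma (H_0 - H)\cosh(\kappa r)\, d\sigma$, while the limit as $s \to \infty$ is, up to a positive constant, the Wang/Chru\'sciel--Herzlich mass of $\tilde M$, which is $\geq 0$ by the hyperbolic positive mass theorem. Chaining these two facts yields the asserted inequality.

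The hardest step will be the hyperbolic quasi-spherical construction: proving global existence of $u$ with the precise asymptotic behavior needed to identify the limit of the weighted flux with a genuine (nonnegative) mass. The parabolic equation for $u$ is only semilinear with natural barriers, so global existence in $s$ hinges on a priori $C^0$ bounds that prevent $u$ from degenerating, and the identification of the limit requires matching the fall-off of the intrinsic and extrinsic geometry of $\Sigma_s$ against the hyperbolic model data. Once these analytic points are in place, the weighted monotonicity calculation itself is essentially algebraic, driven by the fact that $\cosh(\kappa r)$ solves the static Einstein equation on $\mathbb{H}^3_{\kappa^2}$.
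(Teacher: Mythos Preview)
Your outline is faithful to the Wang--Yau/Shi--Tam argument, but note that the paper does not supply its own proof of this lemma at all: it is quoted as a known result, with attribution to \cite{WY1} and \cite{ST}, and used as a black box to bound $\int_\Sigma H\,d\sigma$ later on. So there is nothing in the paper to compare your proposal against beyond the citations; what you have sketched is essentially the proof contained in those references (Pogorelov embedding into $\mathbb{H}^3_{\kappa^2}$, hyperbolic quasi-spherical extension \`a la Bartnik with prescribed scalar curvature $-6\kappa^2$, the weighted monotonicity of $\int_{\Sigma_s}(H_{\mathbb{H}}-H_{\tilde g})\cosh(\kappa r)\,d\sigma_s$, and the hyperbolic positive mass theorem for the limit). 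If your goal is only to reproduce the paper's treatment, a one-line citation suffices; if you intend to actually write out a proof, your plan is the correct one, with the caveat you already flagged that the global existence and asymptotics of $u$ in the quasi-spherical construction are where the real analytic work lies.
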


In the case that $(N^3,\bar{g})$ has horizons, we need extend Lemma \ref{quasi} to the case that $(\Omega, \bar{g})$ has more than one boundary components. This is proved in a recent work by Mantoulidis and Miao \cite{MM} based on the work of Wang and Yau \cite{WY1} and Shi and Tam \cite{ST}.

\begin{lemm}\label{quasi-2}
Let $(\Omega,\bar{g})$ be a $3$-dimensional Riemannian manifold with scalar curvature $\bar{R}\geq -6\kappa^2$ for some $\kappa>0$. Assume that $\Sigma$ is one conneted component of the boundary of $(\Omega,\bar{g})$, which is a topological sphere with scalar curvature $R> -2\kappa^2$ and positive mean curvature $H$. Assume further that all other connected components of the boundary of $(\Omega,\bar{g})$ are minimal surfaces. Then $\Sigma$ can be isometrically embedded into $\mathbb{H}_{\kappa^2}^3$, let $H_0$ be the corresponding mean curvature, then we have
\begin{align*}
\int_\Sigma \left(H_0-H\right)\cosh (\kappa r) d\sigma \geq 0,
\end{align*}
where $r$ is the distance function from the origin in $\mathbb{H}_{\kappa^2}^3$.
\end{lemm}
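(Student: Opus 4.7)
The plan is to adapt the proof of Lemma \ref{quasi} due to Wang-Yau and Shi-Tam to the multi-boundary setting, reducing the inequality to a positive mass theorem and using the minimality of the extra boundary components to neutralize the extra boundary terms. The hypothesis $R > -2\kappa^2$ means the Gauss curvature of $(\Sigma,g)$ exceeds $-\kappa^2$, so Pogorelov's theorem yields an isometric embedding of $\Sigma$ into $\mathbb{H}_{\kappa^2}^3$ with mean curvature $H_0 > 0$. Using the positivity of $H$, I would then carry out Shi-Tam's quasi-spherical filling of the exterior of the embedded image in $\mathbb{H}_{\kappa^2}^3$, prescribing scalar curvature $-6\kappa^2$ and matching the mean curvature $H$ across $\Sigma$. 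Gluing this exterior to $(\Omega,\bar g)$ along $\Sigma$ yields a complete, asymptotically hyperbolic manifold $(\widetilde N, \widetilde g)$ with scalar curvature bounded below by $-6\kappa^2$ (the gluing corner carrying a distributionally nonnegative contribution since the mean curvatures match from the two sides) whose inner boundary is exactly the union of the remaining, minimal components of $\partial \Omega$.

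Next, the standard Shi-Tam monotonicity along the quasi-spherical foliation expresses $\int_\Sigma (H_0 - H)\cosh(\kappa r)\, d\sigma$ as a positive multiple of the hyperbolic ADM mass of $(\widetilde N, \widetilde g)$ plus a nonnegative loss along the foliation. Consequently the inequality to be proved reduces to nonnegativity of the hyperbolic mass of $(\widetilde N, \widetilde g)$.

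The main obstacle is this final step: a positive mass theorem for asymptotically hyperbolic $3$-manifolds of scalar curvature $\geq -6\kappa^2$ whose inner boundary is a possibly disconnected minimal surface. This is precisely what Mantoulidis-Miao \cite{MM} establish, extending the Wang-Yau spinor proof by imposing chiral (APS-type) boundary conditions on the imaginary Killing spinor at each minimal component. The key point is that minimality forces the relevant boundary term in Lichnerowicz's integration-by-parts identity to vanish, so the Witten-type integration proceeds exactly as in the one-boundary case of Lemma \ref{quasi} and delivers the required nonnegativity; this closes the argument.
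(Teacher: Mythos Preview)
The paper does not actually prove Lemma~\ref{quasi-2}; it simply states the result and cites Mantoulidis--Miao \cite{MM} (building on Wang--Yau \cite{WY1} and Shi--Tam \cite{ST}) for the proof, noting in the subsequent remark that \cite{MM} in fact proves a slightly stronger statement. So there is no in-paper argument to compare against beyond this citation.

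Your sketch is consistent with that citation and lays out the expected architecture correctly: Pogorelov's embedding into $\mathbb{H}_{\kappa^2}^3$, the Shi--Tam quasi-spherical exterior with prescribed scalar curvature $-6\kappa^2$ and matched mean curvature, the monotonicity reducing the inequality to nonnegativity of the hyperbolic mass, and finally a positive mass theorem for asymptotically hyperbolic manifolds with minimal inner boundary. This is indeed the route \cite{MM} takes, and the paper's own treatment amounts to pointing the reader there. One caveat: your description of the last step as ``extending the Wang--Yau spinor proof by imposing chiral (APS-type) boundary conditions on the imaginary Killing spinor'' is a plausible mechanism, but you should verify that this is literally how \cite{MM} handles the inner boundary rather than, say, by a fill-in or doubling argument; the paper here does not commit to those details, and neither should you without checking the source.
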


\begin{rema}
We remark that the assumption in \cite{MM} is less restrictive than Lemma \ref{quasi-2}, in fact as long as the mean curvature of all other boundary component of $(\Omega,\bar{g})$ w.r.t. the outer normal is greater or equal than $-2\kappa$, the conclusion still holds. In our case, Lemma \ref{quasi-2} is suffice as we already assume all boundary component of $(N,\bar{g})$ are minimal surfaces.
\end{rema}
\medskip

We attack the problem using Heinz's interior $C^2$ estimate. The procedure involves two steps. The first step is to establish a suitable equation system and to transform the problem to the estimate of the elliptic system. The second to to obtain a lower bound of the Jacobian $J(x,y)$ of the map. The following lemma is in fact the second step, see Theorem 8.3.2 in \cite{S}.

Denote $w=(u,v)\in \mathbb{R}^2,z=(x,y)=z(w)\in \Omega$. Let $x,y\in W^{1,2}(\mathbb{R}^2)$ be solution of the following elliptic system of Heinz-Lewy type
\begin{align}\label{Heinz-Lewy}
Lx&=h_1(z)|Dx|^2+h_2(z)Dx\cdot Dy+h_3(z)|Dy|^2+h_4(z)Dx\wedge Dy,\\\nonumber
Ly&=\tilde{h}_1(z)|Dx|^2+\tilde{h}_2(z)Dx\cdot Dy+\tilde{h}_3(z)|Dy|^2+\tilde{h}_4(z)Dx\wedge Dy,
\end{align}
where
\begin{align*}
L=-\frac{1}{a(z,w)}D_\alpha\left(a(z,w)D_\alpha\right),\quad Dx\wedge Dy=x_uy_v-x_vy_u.
\end{align*}

Assumption: 
\begin{enumerate}
\item  $a\in C^\mu (\Omega\times \mathbb{R}^2)$, such that
\begin{align*}
0<\lambda \leq a(z,w)\leq \Lambda <+\infty,\quad \|a\|_{C^\mu} \leq M_1,
\end{align*}
\item 
\begin{align*}
|h_1(z)|,\cdots,|\tilde{h}_4(z)|\leq M_0,
\end{align*}
\item \begin{align*}
\tilde{h}_1(z)=0,\quad h_1(z)-\tilde{h}_2(z)=0,\quad h_2(z)-\tilde{h}_3(z)=0,\quad h_3(z)=0,
\end{align*}
\item  $a$ is Dini continuous with moduli of continuity $\omega$, such that
\begin{align*}
0<\lambda \leq a(z,w)\leq \Lambda <+\infty,
\end{align*}
\item 
\begin{align*}
h_1(z)=\cdots=\tilde{h}_4(z)=0.
\end{align*}
\end{enumerate}

We remark that assumption (3) is a structure assumption on the equation system to obtain the lower bound of Jacobian $J(x,y)$, assumption (5) implies that $Lx=Ly=0$.

\begin{lemm}\label{Heinz-Lewy theorem}
Let $z(w)$ be a homeomorphism from the closed unit disk $\bar{B}$ onto itself of class $W^{1,2}$ solving the system (\ref{Heinz-Lewy}). Suppose that assumptions (1,2,3) are satisfied, assume that $z(0)=0$ and
\begin{align*}
\int_{B}|Dz|^2dudv\leq M.
\end{align*}
Then $z\in C^{1,\mu}_{loc}(B)\cap C^0(\bar{B})$ and the Jacobian $J(x,y)=x_uy_v-x_vy_u$ does not vanish in $B$. Moreover, for all $B_R$, $0<R<1$, we have
\begin{align*}
\|z\|_{C^{1,\mu}(B_R)}\leq C,\\
J(x,y)\geq c>0,
\end{align*}
where $C,c$ depends on $\mu,\lambda,\Lambda, M_1, M_0,M$ and $R$.

\end{lemm}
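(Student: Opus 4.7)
Plan: I would attack this in two stages --- first establish the interior $C^{1,\mu}$ regularity by a Morrey--Schauder bootstrap adapted to the Heinz--Lewy structure, and then use the structural identities in (3) together with a Bers--Vekua similarity principle to show that the Jacobian does not vanish and is in fact bounded away from zero on $B_R$.

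For the regularity, I begin from the given $W^{1,2}$ bound together with the bound $M_0$ on the coefficients $h_i,\tilde h_i$. Since the right-hand sides of (\ref{Heinz-Lewy}) have quadratic growth in $|Dz|$, the natural first step is a Morrey Dirichlet-growth argument: testing the divergence-form equation against a suitable cutoff and using the local smallness of the Dirichlet integral, one derives Campanato-type decay of the energy on small balls, yielding $z\in C^{0,\mu}_{loc}$. Once $z$ is H\"older continuous, $a(z(\cdot),\cdot)$ is $C^{\mu}$ in $w$ by assumption (1), so the $L^p$ Calder\'on--Zygmund theory for linear divergence equations with H\"older coefficients applies to (\ref{Heinz-Lewy}) (regarded as linear at top order after freezing the known $Dz$-terms), producing $z\in W^{1,p}_{loc}$ for every $p<\infty$, and then Schauder theory delivers the full $C^{1,\mu}$ estimate $\|z\|_{C^{1,\mu}(B_R)}\le C$ with the stated quantitative dependence.

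For the lower bound on the Jacobian, the precise structural relations in (3) are crucial. Setting $w=u+iv$ and letting $\phi=x_{\bar w}$, $\psi=y_{\bar w}$, a direct computation using (3) and the divergence form of $L$ converts (\ref{Heinz-Lewy}) into a generalized Beltrami system
\begin{align*}
\partial_w \begin{pmatrix}\phi\\ \psi\end{pmatrix}
= A(w)\begin{pmatrix}\phi\\ \psi\end{pmatrix}
+ B(w)\,\overline{\begin{pmatrix}\phi\\ \psi\end{pmatrix}},
\end{align*}
with $A,B$ bounded in terms of $M_0$ and the $C^{1,\mu}$ norm of $z$ obtained in the first stage. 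The Bers--Vekua similarity principle then provides a representation $(\phi,\psi)(w)=e^{s(w)}(\phi_0,\psi_0)(w)$ with $s$ bounded continuous and $(\phi_0,\psi_0)$ holomorphic, so the zeros of $J=2\,\mathrm{Im}(\phi\bar\psi)$ are isolated and carry a fixed topological sign. The homeomorphism hypothesis fixes the degree of $z$ globally and rules out such isolated interior zeros, giving $J\neq 0$ in $B$. To upgrade nonvanishing to a uniform positive lower bound $J\ge c>0$ on $B_R$, I would argue by compactness--contradiction: along any sequence of admissible solutions with $\inf_{B_R}J_n\to 0$, the $C^{1,\mu}$ bound produces a convergent subsequence whose limit is itself admissible and has a zero of $J$ inside $B_R$, contradicting the Bers--Vekua conclusion.

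The main obstacle, I expect, is making the Bers--Vekua step quantitative: the similarity principle is typically phrased as a qualitative nonvanishing statement, and extracting an explicit lower bound depending only on $\mu,\lambda,\Lambda,M_1,M_0,M$ and $R$ forces one through the compactness argument above. The delicate technical point there is verifying that the class of solutions of (\ref{Heinz-Lewy}) satisfying (1)--(3) is closed under $C^{1,\mu}_{loc}\cap C^{0}(\bar B)$ convergence, and that the homeomorphism condition --- with its global degree implication --- persists in the weak limit, or at least does so in a form strong enough to exclude an interior degenerate critical point of the limiting $J$.
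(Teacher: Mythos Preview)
The paper does not supply its own proof of this lemma: it is quoted verbatim as Theorem~8.3.2 of Schulz~\cite{S}, and the text immediately preceding the statement refers the reader there. What the paper does provide is the Appendix proof of the Dini variant, Lemma~\ref{Heinz-Lewy Dini}, where it is made explicit that Schulz's argument for Theorem~8.3.2 rests on three ingredients: an interior $C^{1,\mu}$ estimate for the divergence-form system, a unique-continuation result (if a solution is $o(|z|^n)$ for all $n$ then it vanishes), and a Hartman--Wintner type asymptotic expansion obtained via the Bers--Nirenberg representation (if a solution is $o(|z|^n)$ for some $n$ then $\lim_{z\to 0}\varphi_z/z^n$ exists). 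These three feed into the Jacobian bound through a local normal-form argument at any putative zero of $J$, combined with the degree constraint coming from the homeomorphism hypothesis.

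Your two-stage plan is broadly consonant with this scheme: the Morrey--Campanato bootstrap is indeed the route taken in~\cite{S} for the regularity stage, and the appeal to a similarity principle is the right mechanism for the Jacobian. Two points of difference are worth noting. First, the structural hypothesis~(3) is not used in~\cite{S} to set up a vector Beltrami system for $(x_{\bar w},y_{\bar w})$; rather, one forms a single scalar pseudo-analytic function from $x,y$ (this is exactly why the relations $\tilde h_1=0$, $h_1=\tilde h_2$, $h_2=\tilde h_3$, $h_3=0$ are imposed), and the representation theorem is applied to that. Your identity $J=2\,\mathrm{Im}(\phi\bar\psi)$ is also off by a numerical factor, though this is harmless. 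Second, Schulz does not obtain the uniform lower bound $J\ge c$ by compactness--contradiction; the asymptotic expansion is itself quantitative, and together with the degree-one constraint from the homeomorphism it yields the bound directly with the stated dependencies. Your compactness route would work, but it is less direct and, as you yourself note, requires checking that the admissible class is closed under $C^{1,\mu}_{loc}\cap C^0(\bar B)$ limits and that the homeomorphism property survives in a usable form --- complications the textbook argument avoids.
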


In order to prove isometric embedding theorem in space form for Dini continuity case, we need to extend the above lemma to the case that $a$ is only Dini continuous. This lemma will be proved in appendix.

\begin{lemm}\label{Heinz-Lewy Dini}
Let $z(w)$ be a homeomorphism from the closed unit disc $\bar{B}$ onto itself of class $W^{1,2}$ solving the system (\ref{Heinz-Lewy}). Suppose that assumptions (4,5) are satisfied, assume that $z(0)=0$ and
\begin{align*}
\int_{B}|Dz|^2dudv\leq M.
\end{align*}
Then $z\in C^{1}_{loc}(B)\cap C^0(\bar{B})$ and the Jacobian $J(x,y)=x_uy_v-x_vy_u$ does not vanish in $B$. Moreover, for all $B_R$, $0<R<1$, we have
\begin{align*}
\|z\|_{C^{1}(B_R)}\leq C,\\
J(x,y)\geq c>0,
\end{align*}
where $C,c$ depends on $\lambda,\Lambda, M,\omega$ and $R$. In particular, the moduli of continuity of $Dz$ is under control.

\end{lemm}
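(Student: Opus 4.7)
The plan is to mirror the Hölder-case proof of Lemma~\ref{Heinz-Lewy theorem}, replacing each Hölder input by its Dini-continuous analogue. There are essentially two steps: (i) interior $C^1$ regularity of $z$ with a quantitative modulus of continuity on $Dz$, and (ii) the Lewy-type non-vanishing of the Jacobian. The continuity $z\in C^0(\bar B)$ is immediate from the homeomorphism hypothesis.

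For step (i), hypothesis (5) collapses the system (\ref{Heinz-Lewy}) to $Lx=Ly=0$, so $x$ and $y$ are $W^{1,2}$ weak solutions of the divergence-form equation
\begin{equation*}
D_\alpha\bl a(z(w),w)\,D_\alpha u\br=0,
\end{equation*}
whose coefficient is Dini continuous in $w$ (with modulus controlled by $\omega$, $\lambda$ and $\Lambda$ once we know $z\in C^0$). The classical $C^1$ regularity theorem for divergence-form equations with Dini-continuous coefficients---a Campanato-type estimate in the spirit of Burch and Kovats---then yields $u\in C^1_{\mathrm{loc}}(B)$ with a modulus of continuity of $Du$ depending only on $\lambda,\Lambda,M,\omega$ and the distance to $\partial B$. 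Applied to $u=x$ and $u=y$, this gives the $C^1(B_R)$ bound claimed in the lemma.

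Step (ii) is the main obstacle. The key technical input, which is precisely what the Dini condition $\int_0^1\omega(r)/r\,dr<\infty$ buys us, is a Hartman--Wintner-type expansion: at any interior critical point $w_0$ of an $a$-harmonic function $u$ there exist an integer $n\ge 2$ and a nonzero constant $c$ with
\begin{equation*}
u(w)-u(w_0)=\mathrm{Re}\bl c\,(w-w_0)^n\br + o(|w-w_0|^n).
\end{equation*}
Granting this, suppose for contradiction that $J(x,y)(w_0)=0$ at some interior $w_0$. Then $Dz(w_0)$ has rank at most one; applying the expansion to $x-x(w_0)$ and $y-y(w_0)$ (or, in the rank-one case, to a linear combination $\eta_1 x+\eta_2 y$ that has a critical point at $w_0$) exhibits a polynomial normal form which forces the local topological degree of $z$ at $w_0$ to be at least $2$. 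This contradicts the fact that $z$ is a homeomorphism. Hence $J$ never vanishes in $B$, and by continuity of $J$ on each compact subset $\bar B_R\subset B$ we obtain a uniform lower bound $J\ge c(R)>0$.

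The principal technical novelty, which is the content of the appendix, is the Hartman--Wintner expansion above under only Dini continuity of $a$. It sharpens the $C^1$ regularity of step (i) into a polynomial normal form at critical points and is proved by a freezing-coefficient argument iterated on dyadic scales, in which the Dini integrability of $\omega$ controls the accumulated error.
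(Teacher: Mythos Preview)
Your outline tracks the paper's strategy closely: both reduce to the Schulz framework (Theorem~8.3.2 in \cite{S}) and isolate the same two ingredients---interior $C^1$ regularity for the scalar equation $D_\alpha(a\,D_\alpha u)=0$ with Dini coefficient, and a Hartman--Wintner normal form at interior critical points feeding into a topological-degree contradiction with the homeomorphism hypothesis. For the $C^1$ step the paper invokes the recent result of Y.~Y.~Li \cite{Li} rather than a Campanato/Burch-type argument; either works, but Li's statement is exactly tailored to this setting and is what the paper cites. The paper also records separately a unique-continuation lemma (due to Schulz \cite{S1}) guaranteeing that an $a$-harmonic function cannot vanish to infinite order; this is implicit in your claim that a finite $n$ and nonzero $c$ exist, so you should flag it as a needed input.

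The genuine methodological divergence is in the proof of the normal-form lemma. You propose a freezing-coefficient/dyadic-iteration argument controlled by the Dini integral. The paper instead takes a complex-analytic route: it introduces the $a$-harmonic conjugate $\phi$, sets $\eta=\varphi+i\phi$, and applies the Bers--Nirenberg representation $\eta=f(\chi)$ with $f$ holomorphic and $\chi$ a quasiconformal homeomorphism. The crux is then to show $\chi\in C^1_{\mathrm{loc}}$; the paper does this by verifying that the Beltrami coefficient $Ae^{i\theta}$ is itself Dini continuous (this is where the Dini hypothesis is actually used, via a careful two-case estimate splitting on $|z_1-z_2|\lessgtr |z_1|^\beta$), rewriting the Beltrami system as a divergence-form system with Dini coefficients, and applying Li's lemma once more. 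This approach buys concreteness---the normal form is literally $f(\chi)$ with $\chi(z)=C_0 z+o(|z|)$---and recycles the $C^1$ regularity input rather than introducing a separate iteration; your approach would be more self-contained and real-variable, but the details under mere Dini continuity are not entirely routine.
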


\section{Isometric embedding into space form}
In this section, we prove isometric embedding into space form using Heinz's type interior $C^2$ estimate, under the condition that $g\in C^2$ with $D^2g$ Dini continuous.

We first state a proposition.
\begin{prop}\label{prop-1}
Let $\Sigma=(\mathbb{S}^2, g)$ be a $C^2$ surface isometrically embedded into space form $N_K^3$, such that the second order derivative is Dini continuous. For any point $(x_0,y_0)\in \Sigma$, let $B_{r_0}(x_0,y_0)$ be a geodesic disk in $\Sigma$,  where $r_0$ is a fixed small constant. Suppose that
\begin{align*}
D=\frac{R-2K}{2}\det (g_{ij})\geq C_0>0,\quad \forall (x,y)\in  \bar{B}_{r_0},
\end{align*}
and
\begin{align*}
\int_\Sigma Hd\sigma \leq M.
\end{align*}
Then there exists a homeomorphism $(x,y)=(x(u,v),y(u,v))$ from $\bar{B}=\{u^2+v^2\leq 1\}$ onto $\bar{B}_{r_0}$ of class $C^1_{loc}(B)\cap C^0(\bar{B})$ with $x(0)=x_0,y(0)=y_0$ such that
\begin{align}
\frac{h_{11}}{\sqrt{D}}&=\frac{y_u^2+y_v^2}{J(x,y)},\\\nonumber
-\frac{h_{12}}{\sqrt{D}}&=\frac{x_uy_u+x_vy_v}{J(x,y)},\\\nonumber
\frac{h_{22}}{\sqrt{D}}&=\frac{x_u^2+x_v^2}{J(x,y)}.
\end{align} 
Further more,
\begin{align}\label{divergence operator}
Lx=0,\quad Ly=0,
\end{align}
where
\begin{align*}
L=\frac{\partial}{\partial u}\left(\sqrt{D}\frac{\partial}{\partial u}\right)+\frac{\partial}{\partial v}\left(\sqrt{D}\frac{\partial}{\partial v}\right).
\end{align*}

\end{prop}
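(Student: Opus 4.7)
\medskip

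\noindent\textbf{Plan.}
The strategy is to apply the Hartman--Wintner lemma to the normalized second fundamental form $\tilde h_{ij}:=h_{ij}/\sqrt{D}$, and then to derive the divergence-form equations $Lx=Ly=0$ from the Codazzi identity, whose ambient right-hand side $\bar R_{\nu ijk}$ vanishes identically in a constant-curvature ambient. By the Gauss equation (\ref{G}) specialized to the space form, $\det(h^i_j)=(R-2K)/2$, hence $\det(h_{ij})=D$ and $\det\tilde h_{ij}=1$. The ellipticity $D\geq C_0>0$ together with convexity of $\Sigma$ makes $\tilde h_{ij}$ uniformly positive-definite, and the hypothesis that $D^2g$ is Dini continuous propagates, via the Gauss--Codazzi structure of the embedding, to Dini continuity of $h_{ij}$ and hence of $\tilde h_{ij}$. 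Applying Lemma~\ref{isothermal} produces local isothermal parameters $(u,v)$ satisfying $\tilde h_{ij}\,dx^i\,dx^j=\Lambda(du^2+dv^2)$ near $(x_0,y_0)$; a Riemann-mapping reparametrization (using simple-connectedness of the geodesic disk) then turns this into a homeomorphism $\bar B\to\bar B_{r_0}$, $0\mapsto(x_0,y_0)$, of class $C^1_{loc}(B)\cap C^0(\bar B)$ with positive Jacobian $J$. Solving the isothermal condition as a $3\times 3$ linear system in $(\tilde h_{11},\tilde h_{12},\tilde h_{22})$, and using $\det\tilde h=1$ together with the area identity $dx\wedge dy=J\,du\wedge dv$ (which forces $\Lambda=J$), yields the stated formulas for $h_{ij}/\sqrt{D}$.

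For the divergence identities $Lx=Ly=0$, the plan is to use the isothermal formulas to recast $L$ as an exterior-derivative operator: a direct substitution gives $\sqrt{D}\,(x_u\,dv-x_v\,du)=h_{12}\,dx+h_{22}\,dy$ and $\sqrt{D}\,(y_u\,dv-y_v\,du)=-(h_{11}\,dx+h_{12}\,dy)$, so that $Lx\,du\wedge dv$ and $Ly\,du\wedge dv$ are the exterior derivatives of the respective one-forms on $\bar B_{r_0}$. In a constant-curvature ambient the tensor $\bar R_{\nu ijk}$ vanishes identically (since $\bar R_{abcd}=K(\bar g_{ac}\bar g_{bd}-\bar g_{ad}\bar g_{bc})$ has zero contraction against one normal slot and three tangential slots), so the Codazzi identity (\ref{Codazzi}) reduces to the totally symmetric $\nabla_k h_{ij}=\nabla_j h_{ik}$. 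A careful computation, combining this reduced Codazzi identity with the isothermal parametrization and tracking the Christoffel corrections that pass between covariant and partial derivatives in the chart $(x,y)$, will then show that the two one-forms above are closed, which is exactly $Lx=Ly=0$. The system thus produced has the structure of Assumption~(5) in Lemma~\ref{Heinz-Lewy Dini}.

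The principal technical obstacle is the low regularity of the data: since $D^2g$ is merely Dini continuous, Lemma~\ref{isothermal} delivers only a $C^1$ isothermal chart (rather than $C^{1,\alpha}$), which forces the subsequent Heinz--Lewy theory to be developed in the Dini framework of Lemma~\ref{Heinz-Lewy Dini} rather than in the H\"older framework of Lemma~\ref{Heinz-Lewy theorem}. A secondary subtlety is upgrading the local isothermal chart to a homeomorphism of the full unit disk onto the full geodesic disk $\bar B_{r_0}$, which is the Riemann-mapping step and requires boundary control to secure $C^0(\bar B)$ regularity and positivity of $J$ up to the boundary.
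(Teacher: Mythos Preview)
Your overall strategy matches the paper's: introduce isothermal coordinates for the (normalized) second fundamental form via Lemma~\ref{isothermal}, globalize by uniformization, read off the conformal relations, and use the Codazzi identity (with $\bar R_{\nu ijk}\equiv 0$ in a space form) to obtain the divergence equations $Lx=Ly=0$. The structure is right, and your remark about tracking Christoffel corrections is in fact more careful than the paper's shorthand.

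The genuine gap is that you never use the hypothesis $\int_\Sigma H\,d\sigma\le M$, and you misattribute the regularity $C^1_{loc}(B)\cap C^0(\bar B)$ to the Hartman--Wintner lemma and the Riemann-mapping step. Neither of those gives quantitative control on the chart. The point of the mean-curvature bound is that, once the conformal relations are in hand,
\[
\int_B\bigl(|Dx|^2+|Dy|^2\bigr)\,du\,dv \;=\; \int_{B_{r_0}}\frac{h_{11}+h_{22}}{\sqrt D}\,dx\,dy \;\le\; C\int_\Sigma H\,d\sigma \;\le\; CM,
\]
and this Dirichlet energy bound is the only quantitative input available. In the paper it is fed, together with $Lx=Ly=0$, into a Courant--Lebesgue type lemma (Lemma~1.6.3 in \cite{S}) to obtain $z\in C^0(\bar B)$, and into Y.Y.~Li's $C^1$ regularity theorem for divergence equations with Dini coefficients (Lemma~\ref{Li}) to obtain $z\in C^1_{loc}(B)$. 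Without this step your plan has no mechanism for boundary continuity, and the energy bound is also the hypothesis that later allows you to invoke Lemma~\ref{Heinz-Lewy Dini}.

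Two minor corrections: the Dini continuity of $h_{ij}$ is immediate from the hypothesis that the embedding has Dini-continuous second derivatives (since $h$ is algebraic in $DX$, $D^2X$, $\bar g$), not something that has to be ``propagated via Gauss--Codazzi''; and the proposition does not assert positivity of $J$ up to the boundary---only $J>0$ in $B$, which is established afterwards via Lemma~\ref{Heinz-Lewy Dini}.
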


\begin{proof}
We first prove the existence of isothermal parameters $(u,v)$ in $B_{r_0}$.

By (\ref{G}), we have
\begin{align*}
\det(h_{ij})=\det(h^i_j)\det(g_{ij})=\frac{R-2K}{2}\det(g_{ij})=D\geq C_0.
\end{align*}
Thus we can consider
\begin{align*}
ds^2=h_{11}dx^2+2h_{12}dxdy+h_{22}dy^2,
\end{align*}
as a positive metric in $B_{r_0}$.

Since $\Sigma$ is a $C^2$ surface with $D^2X$ Dini continuous, thus $h_{ij}$ is Dini continuous. By Lemma \ref{isothermal}, we have
\begin{align}\label{metric-1}
ds^2=\Lambda (du^2+dv^2),\quad \Lambda\neq 0,
\end{align}
in each $B_r\subset B_{r_0}$.

This is the local existence of isothermal parameters. We now want to apply uniformization theorem to prove the existence of $(u,v)$ in $B_{r_0}$. To do that, we only need to check the transformation map is holomorphic.

Let $w_1=(u_1,v_1)$, $w_2=(u_2,v_2)$ be two isothermal parameters. In the intersection area, we have
\begin{align*}
ds^2=\Lambda_1(du_1^2+dv_1^2)=\Lambda_2 (du_2^2+dv_2^2).
\end{align*}

By simple calculation, we have
\begin{align*}
&{u_1}^2_{u_2}+{v_1}^2_{u_2}={u_1}^2_{v_2}+{v_1}^2_{v_2},\\
&{u_1}_{u_2}{u_1}_{v_2}+{v_1}_{u_2}{v_1}_{v_2}=0.
\end{align*}
This is in fact the standard Cauchy-Riemann system
\begin{align*}
{u_1}_{u_2}={v_1}_{v_2},\quad {u_1}_{v_2}=-{v_1}_{u_1},
\end{align*}
i.e.
\begin{align*}
{w_1}_{\bar{w}_2}=0.
\end{align*}

Thus we have a homeomorphism $(x,y)=(x(u,v),y(u,v))$ from $\bar{B}=\{u^2+v^2\leq 1\}$ onto $\bar{B}_{r_0}$ with $x(0)=x_0,y(0)=y_0$.

By isothermal parameters (\ref{metric-1}), we have 
\begin{align}\label{inverse-1}
&\sqrt{D}x_u=h_{12}x_v+h_{22}y_v,\\\nonumber
&\sqrt{D}x_v=-h_{12}x_u-h_{22}y_u,
\end{align}
and
\begin{align}
&\sqrt{D}y_u=-h_{11}x_v-h_{12}y_v,\\\nonumber
&\sqrt{D}y_v=h_{11}x_u+h_{12}y_u.
\end{align}
 
The equivalent conformal relations are
\begin{align}\label{conformal-1}
\frac{h_{11}}{\sqrt{D}}&=\frac{y_u^2+y_v^2}{J(x,y)},\\\nonumber
-\frac{h_{12}}{\sqrt{D}}&=\frac{x_uy_u+x_vy_v}{J(x,y)},\\\nonumber
\frac{h_{22}}{\sqrt{D}}&=\frac{x_u^2+x_v^2}{J(x,y)}.
\end{align} 

We now derive the relation (\ref{divergence operator}). By (\ref{inverse-1}), we deduce that
\begin{align*}
Lx&=(h_{12})_ux_v-(h_{12})_vx_u+(h_{22})_uy_v-(h_{22})_vy_u\\
&=\left((h_{22})_x-(h_{12})_y\right)J(x,y).
\end{align*}
By Codazzi equation (\ref{Codazzi})
\begin{align}
\nabla_1 h_{22}-\nabla_2 h_{12}=0.
\end{align}
thus
\begin{align*}
Lx=0.
\end{align*}

Similarly, we have
\begin{align*}
Ly=0.
\end{align*}

By (\ref{conformal-1}), we have
\begin{align*}
\int_B\left(|{Dx}|^2+|{Dy}|^2\right)dudv=\int_{B_{r_0}}\frac{h_{11}+h_{22}}{\sqrt{D}}dxdy\leq C\int_\Sigma H d\sigma \leq C,
\end{align*}
as $g$ is continuous and $D\geq C_0$ on $B_{r_0}$.

Together with (\ref{divergence operator}), Lemma 1.6.3 in \cite{S} and recent theorem in \cite{Li} by Y.Y. Li(see Lemma \ref{Li} below), we conclude that
\begin{align*}
x,y\in C^1_{loc}(B)\cap C^0(\bar{B}).
\end{align*}

The propsition is now proved.

\end{proof}

\medskip

We are now in position to prove Theorem \ref{thm-hyperbolic}.

\begin{proof}
We first need to verify the condition
\begin{align*}
\int_\Sigma Hd\sigma\leq M.
\end{align*}
Since space form are of warped product structures, we can write the metric $\bar{g}$ as follows
\begin{align*}
\bar{g}=dr^2+\phi^2(r)g_{\mathbb{S}^2},
\end{align*}
where $\phi(r)=r$ if $K=0$, $\phi(r)=\frac{\sinh(\kappa r)}{\kappa}$ if $K=-\kappa^2$, $\phi(r)=\frac{\sin(\kappa r)}{\kappa}$ if $K=\kappa^2$ and $g_{\mathbb{S}^2}$ is the standard metric on $\mathbb{S}^2$. Note that $g_{\mathbb{S}^2}$ is different from the metric $g$ on $\Sigma$.

Since warped product spaces equip with a conformal Killing vector field, we have the following formulas, see for example in \cite{GLi}.
\begin{align}\label{conformal killing}
\Phi_{ij}=\phi^\prime g_{ij}-h_{ij}\left\langle \phi \frac{\partial}{\partial r},\nu\right\rangle,
\end{align}
where $\Phi=\int_0^r \phi(\rho)d\rho$ and $\nu$ is the unit outer normal of $\Sigma$.

Take trace of (\ref{conformal killing}) with $\frac{\partial \det(h_{ij})}{\partial h_{ij}}$, and integrate on $\Sigma$, we have
\begin{align*}
\int_\Sigma \frac{\partial \det(h_{ij})}{\partial h_{ij}}\Phi_{ij}d\sigma=\int_\Sigma H\phi^\prime d\sigma-2\int_\Sigma \det(h_{ij}) \left\langle \phi \frac{\partial}{\partial r},\nu\right\rangle d\sigma.
\end{align*}
By Gauss equation (\ref{G}) and Codazzi equation (\ref{Codazzi}) , we have
\begin{align*}
\int_\Sigma H\phi^\prime d\sigma=\int_\Sigma (R+2\kappa^2)\left\langle \phi \frac{\partial}{\partial r},\nu\right\rangle d\sigma.
\end{align*}

For $K=0$, we have $\phi^\prime=1$. For $K=-\kappa^2$, we have $\phi^\prime=\cosh(\kappa r)\geq 1$. For $K=\kappa^2$, by the Rauch comparison theorem, the diameter of $\Sigma$ is strictly less then $\frac{\pi}{\kappa}$, by a proper choice of origin, we have $\phi^\prime=\cos(\kappa r)\geq C$. 

To sum up, we have
\begin{align}\label{wyst inequality}
\int_\Sigma Hd\sigma\leq \frac{1}{C}\int_\Sigma H\phi^\prime d\sigma= \frac{1}{C}\int_\Sigma ( R+2\kappa^2)\left\langle \phi \frac{\partial}{\partial r},\nu\right\rangle d\sigma\leq M.
\end{align}
Here we use the fact that $H>0$ for convex surfaces and the diameter of $\Sigma$ is bounded by the $\|g\|_{C^2}$.

We can now apply Lemma \ref{Heinz-Lewy Dini}, together Proposition \ref{prop-1}, we have
\begin{align*}
|h_{ij}|\leq C,
\end{align*}
for $z=(x,y)\in B_{r_0/2}$, where $C$ depends only on $C_0$, $\kappa$, $\|g\|_{C^2}$ and $\omega$.

Moreover, we have for $h_{ij}(z)=h_{ij}(w(z))$
\begin{align}\label{C^2 alpha}
|h_{ij}(w_1)-h_{ij}(w_2)|\leq C\beta(|w_1-w_2|),
\end{align}
for $z_1,z_2\in B_{r_0/2}$, where $w_1=(u_1,v_2), w_2=(u_2,v_2)$, constant $C$ and function $\beta$ depends only on $C_0$, $\kappa$, $\|g\|_{C^2}$ and $\omega$.

By Propsition \ref{prop-1}, $x,y\in C^0(\bar{B})$ and map boundary to boundary, this implies there exists $0<\rho<1$, such that if $(x,y)\in B_{r_0/2}$, then $u^2+v^2<\rho^2$.

By Taylor expansion
\begin{align*}
|z_1-z_2|=|J(x,y)(\tilde{w})||w_1-w_2|\geq c|w_1-w_2|,
\end{align*}
for $z_1,z_2\in B_{r_0/2}$. Together with (\ref{C^2 alpha}), we have
\begin{align*}
|h_{ij}(z_1)-h_{ij}(z_2)|\leq C\beta(|z_1-z_2|)
\end{align*}
for $z_1,z_2\in B_{r_0/2}$.

In view of (\ref{conformal killing}), we have
\begin{align*}
\|\Phi\|_{C^2}\leq C
\end{align*}
for $(x,y)\in B_{r_0/2}$, with moduli of continuity of $D^2\Phi$ under control.

The full a priori estimate now follows by the fact that $\Sigma$ can be covered by finite patches of $B_{r_0/2}$.

For the case $K\leq 0$, the theorem follows by Pogorelov's isometric embedding into hyperbolic space. For the case $K>0$, we can first establish isometric embedding using the openness theorem by Li and Wang \cite{GLW}, i.e. Theorem \ref{openness theorem} in section 5 and the normalized Ricci flow. The theorem now follows from the a priori estimate established above.

\end{proof}

\section{A priori estimate of the embedding}
In this section, we consider the a priori estimate of the embedding. We first state a proposition, which is essentially contained in \cite{H4}, too. For the sake of completeness, we give the proof here.
\begin{prop}\label{key prop}
Let $\Sigma=(\mathbb{S}^2, g)$ be a $C^3$ surface isometrically embedded into a $C^3$ ambient space $(N^3,\bar{g})$, for any point $(x_0,y_0)\in \Sigma$, let $B_{r_0}(x_0,y_0)$ be a geodesic disk in $\Sigma$, where $r_0$ is a fixed small constant. Suppose that
\begin{align*}
D=\left(\frac{R-\bar{R}}{2}+\bar{Ric}(\nu,\nu)\right)\det(g_{ij})\geq C_0>0,\quad \forall (x,y)\in  \bar{B}_{r_0},
\end{align*}
and
\begin{align*}
\int_\Sigma Hd\mu \leq M.
\end{align*}
Then there exists a homeomorphism $(x,y)=(x(u,v),y(u,v))$ from $\bar{B}=\{u^2+v^2\leq 1\}$ onto $\bar{B}_{r_0}$ of class $C^{1,\mu}_{loc}(B)\cap C^0(\bar{B})$ for any $0<\mu<1$ with $x(0)=x_0,y(0)=y_0$ such that
\begin{align}\label{conformal-3}
\frac{h_{11}}{\sqrt{D}}&=\frac{y_u^2+y_v^2}{J(x,y)},\\\nonumber
-\frac{h_{12}}{\sqrt{D}}&=\frac{x_uy_u+x_vy_v}{J(x,y)},\\\nonumber
\frac{h_{22}}{\sqrt{D}}&=\frac{x_u^2+x_v^2}{J(x,y)}.
\end{align} 
Further more,
\begin{align}\label{laplace}
\Delta x&=h_1(z)|Dx|^2+h_2(z)Dx\cdot Dy+h_3(z)|Dy|^2+h_4(z)Dx\wedge Dy,\\\nonumber
\Delta y&=\tilde{h}_1(z)|Dx|^2+\tilde{h}_2(z)Dx\cdot Dy+\tilde{h}_3(z)|Dy|^2+\tilde{h}_4(z)Dx\wedge Dy,
\end{align}
where
\begin{align*}
\Delta=\frac{\partial^2}{\partial u^2}+\frac{\partial^2}{\partial v^2}
\end{align*}
is the flat Laplacian of $(u,v)$, and
\begin{align*}
h_1&=-\frac{1}{2D}\left(\left(\frac{R_1-\bar{R}_\alpha X^\alpha_1}{2}+\bar{Ric}_\alpha(\nu,\nu) X^\alpha_1\right)\det(g_{\alpha\beta})+\left(\frac{R-\bar{R}}{2}+\bar{Ric}(\nu,\nu)\right)\det(g_{\alpha\beta})_1\right),\\
h_2&=-\frac{1}{2D}\left(\left(\frac{R_2-\bar{R}_\alpha X^\alpha_2}{2}+\bar{Ric}_\alpha(\nu,\nu) X^\alpha_2\right)\det(g_{\alpha\beta})+\left(\frac{R-\bar{R}}{2}+\bar{Ric}(\nu,\nu)\right)\det(g_{\alpha\beta})_2\right),\\
h_3&=0,\\
h_4&=\frac{\bar{R}_{\nu 221}-g^{11}\bar{Ric}(e_1,\nu)\det(g_{\alpha\beta})}{\sqrt{D}},
\end{align*}
\begin{align*}
\tilde{h}_1&=0,\\
\tilde{h}_2&=-\frac{1}{2D}\left(\left(\frac{R_1-\bar{R}_\alpha X^\alpha_1}{2}+\bar{Ric}_\alpha(\nu,\nu) X^\alpha_1\right)\det(g_{\alpha\beta})+\left(\frac{R-\bar{R}}{2}+\bar{Ric}(\nu,\nu)\right)\det(g_{\alpha\beta})_1\right),\\
\tilde{h}_3&=-\frac{1}{2D}\left(\left(\frac{R_2-\bar{R}_\alpha X^\alpha_2}{2}+\bar{Ric}_\alpha(\nu,\nu) X^\alpha_2\right)\det(g_{\alpha\beta})+\left(\frac{R-\bar{R}}{2}+\bar{Ric}(\nu,\nu)\right)\det(g_{\alpha\beta})_2\right),\\
\tilde{h}_4&=\frac{\bar{R}_{\nu 112}-g^{22}\bar{Ric}(e_2,\nu)\det(g_{\alpha\beta})}{\sqrt{D}}.
\end{align*}

\end{prop}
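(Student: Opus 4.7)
The plan is to follow the same blueprint as Proposition \ref{prop-1}, tracking two new contributions that arise in the non-space-form setting: the ambient Riemann tensor term in the Codazzi equation, and the position-dependence of $D$. First, since the hypothesis $D \ge C_0 > 0$ and the Gauss equation (\ref{G}) give $\det(h_{ij}) = D \ge C_0$, and since $g, \bar g \in C^3$ makes $h_{ij}$ at least $C^1$ (hence Dini continuous), Lemma \ref{isothermal} produces local isothermal parameters $(u,v)$ for the metric $h_{11}dx^2 + 2h_{12}dxdy + h_{22}dy^2$. The transition between overlapping isothermal charts satisfies Cauchy--Riemann, exactly as in the proof of Proposition \ref{prop-1}, so by the uniformization theorem on the simply connected disk we obtain a global homeomorphism $(x,y) = (x(u,v), y(u,v))$ from $\bar B$ onto $\bar B_{r_0}$ fixing the center. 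Reading off the isothermal condition yields the system
\begin{align*}
\sqrt{D}\, x_u &= h_{12} x_v + h_{22} y_v, & \sqrt{D}\, x_v &= -h_{12} x_u - h_{22} y_u, \\
\sqrt{D}\, y_u &= -h_{11} x_v - h_{12} y_v, & \sqrt{D}\, y_v &= h_{11} x_u + h_{12} y_u,
\end{align*}
which inverts to the conformal relations (\ref{conformal-3}).

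Next, I would compute the divergence-form combination $Lx := \partial_u(\sqrt D\, x_u) + \partial_v(\sqrt D\, x_v)$. Substituting the first-order relations and cancelling mixed second derivatives gives, exactly as in Proposition \ref{prop-1}, $Lx = J(x,y)\bigl((h_{22})_x - (h_{12})_y\bigr)$. In the space-form case Codazzi made this vanish; in the general case Codazzi (\ref{Codazzi}) contributes precisely $\bar R_{\nu 221}$ (plus Christoffel corrections that, once raised to the Weingarten tensor via $g^{ij}$, collect as a single $g^{11}\bar{Ric}(e_1,\nu)\det(g_{\alpha\beta})$ term). The analogous identity holds for $Ly$ with $\bar R_{\nu 112}$ and $g^{22}\bar{Ric}(e_2,\nu)$. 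Rewriting $L = \sqrt D\, \Delta + (\sqrt D)_u \partial_u + (\sqrt D)_v \partial_v$ and expanding $(\sqrt D)_u = \tfrac{1}{2\sqrt D}(D_x x_u + D_y y_u)$ by the chain rule, then using (\ref{conformal-3}) to convert quantities like $x_u^2 + x_v^2$ and $x_u y_u + x_v y_v$ into multiples of $J(x,y)$, I produce the flat-Laplacian equations (\ref{laplace}). The coefficients $h_i, \tilde h_i$ then emerge by computing $D_1 = \partial_{x^1} D$ and $D_2 = \partial_{x^2} D$ explicitly from $D = (\tfrac{R - \bar R}{2} + \bar{Ric}(\nu,\nu))\det(g_{\alpha\beta})$, where the chain rule through the ambient tensors $\bar R, \bar{Ric}$ is applied along $X^\alpha_i$ and $\nu$ is held implicit (giving the $\bar R_\alpha X^\alpha_i$ and $\bar{Ric}_\alpha(\nu,\nu) X^\alpha_i$ terms listed in the statement). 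The final Codazzi contribution, divided by $\sqrt D$, supplies $h_4$ and $\tilde h_4$, while the structural relations $h_3 = \tilde h_1 = 0$, $h_1 = \tilde h_2$, $h_2 = \tilde h_3$ fall out automatically from the computation and match assumption (3) of Lemma \ref{Heinz-Lewy theorem}.

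Finally, the total mean curvature bound $\int_\Sigma H\, d\mu \le M$ combined with (\ref{conformal-3}) gives $\int_B (|Dx|^2 + |Dy|^2)\, du dv \le C$, furnishing the Dirichlet-energy bound required by Lemma \ref{Heinz-Lewy theorem}. The coefficient $a = \sqrt D$ lies in $C^{1,\mu}$ (since $g, \bar g \in C^3$) and is bounded between two positive constants by the ellipticity hypothesis, verifying assumption (1); the $h_i, \tilde h_i$ are bounded in terms of $\|g\|_{C^3}, \|\bar g\|_{C^3}$, $C_0$, verifying assumption (2). Lemma \ref{Heinz-Lewy theorem} therefore delivers $(x,y) \in C^{1,\mu}_{\mathrm{loc}}(B) \cap C^0(\bar B)$ for every $0 < \mu < 1$, completing the proposition. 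I expect the main obstacle to be the bookkeeping in the second paragraph: one must extract the Codazzi curvature correction in exactly the invariant form $\bar R_{\nu ijk} - g^{kk}\bar{Ric}(e_k,\nu)\det(g_{\alpha\beta})$ and must carefully apply the chain rule to $D$ so that only the ambient-coordinate derivatives $\bar R_\alpha, \bar{Ric}_\alpha$ appear (the normal $\nu$ being treated as a passive parameter in this differentiation), so that the resulting coefficients line up exactly with the template prescribed in (\ref{laplace}).
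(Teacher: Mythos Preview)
Your overall strategy matches the paper's: construct isothermal parameters for $h_{ij}$, read off the conformal relations, and compute the flat Laplacian of $(x,y)$ to produce a Heinz--Lewy system. Going through $L=\partial_u(\sqrt D\,\partial_u)+\partial_v(\sqrt D\,\partial_v)$ and then splitting off $\sqrt D\,\Delta$ is equivalent to the paper's direct evaluation of $\Delta x$ from $x_u=(h_{12}x_v+h_{22}y_v)/\sqrt D$, which yields $\Delta x=\bigl((h_{22}/\sqrt D)_x-(h_{12}/\sqrt D)_y\bigr)J$.

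There is, however, a genuine error in your second paragraph, and it is not mere bookkeeping. You attribute the $g^{11}\bar{Ric}(e_1,\nu)\det(g_{\alpha\beta})$ piece of $h_4$ to ``Christoffel corrections'' arising when the Codazzi identity is applied to $(h_{22})_x-(h_{12})_y$, and you propose to treat $\nu$ as a passive parameter when differentiating $D$. Neither is tenable. The Christoffel symbols of $(\Sigma,g)$ are purely intrinsic and cannot manufacture an ambient object like $\bar{Ric}(e_1,\nu)$; any such corrections are linear in $h_{ij}$ and, once fed through the conformal relations, land in the $|Dx|^2$, $Dx\cdot Dy$, $|Dy|^2$ coefficients rather than in the Jacobian coefficient. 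In the paper the Codazzi equation contributes exactly $\bar R_{\nu 221}$ to $h_4$, and nothing more. The second half of $h_4$ comes instead from differentiating $\bar{Ric}(\nu,\nu)$ inside $D$ \emph{through the normal}: by the Weingarten relation $\bar\nabla_i\nu=h^j_i e_j$, the derivative $D_i$ acquires the extra term $2\bar{Ric}(e_j,\nu)h^j_i\det(g_{\alpha\beta})$. In the combination $h_{22}D_1-h_{12}D_2$ the cofactor identity $h_{22}h^1_1-h_{12}h^1_2=g^{11}\det(h_{ij})=g^{11}D$ collapses this contribution to $2g^{11}D\,\bar{Ric}(e_1,\nu)\det(g_{\alpha\beta})$; after dividing by $2D^{3/2}$ it becomes the $-g^{11}\bar{Ric}(e_1,\nu)\det(g_{\alpha\beta})/\sqrt D$ half of $h_4$. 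If you hold $\nu$ passive you lose this term outright and will not arrive at the stated $h_4,\tilde h_4$.

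A minor side point: once (\ref{laplace}) is in hand the operator is the flat $\Delta$, so the relevant coefficient $a$ in Lemma~\ref{Heinz-Lewy theorem} is $a\equiv 1$, not $a=\sqrt D$.
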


\begin{proof}
By (\ref{G}), we have
\begin{align*}
\det(h_{ij})=\left(\frac{R-\bar{R}}{2}+\bar{Ric}(\nu,\nu)\right)\det(g_{ij})=D\geq C_0.
\end{align*}
Thus we can consider
\begin{align*}
ds^2=h_{11}dx^2+2h_{12}dxdy+h_{22}dy^2,
\end{align*}
as a positive metric on $B_{r_0}$.

Similarly to Proposition \ref{prop-1}, there exists homeomorphism $(x,y)=(x(u,v),y(u,v))$ from $\bar{B}=\{u^2+v^2\leq 1\}$ onto $\bar{B}_{r_0}$ of class $C^{1,\mu}_{loc}(B)\cap C^0(\bar{B})$ for any $0<\mu<1$ with $x(0)=x_0,y(0)=y_0$ such that
\begin{align*}
ds^2=\Lambda (du^2+dv^2).
\end{align*}
Note that instead of using the Theorem in \cite{Li} by Y.Y. Li, we may use Theorem 2.4.4 in \cite{S} to obtain the $C^{1,\mu}_{loc}$ estimate.

We only need to verify the relation (\ref{laplace}).

By this isothermal parameters, we have
\begin{align}\label{inverse}
&\sqrt{D}x_u=h_{12}x_v+h_{22}y_v,\\\nonumber
&\sqrt{D}x_v=-h_{12}x_u-h_{22}y_u,
\end{align}
and
\begin{align}
&\sqrt{D}y_u=-h_{11}x_v-h_{12}y_v,\\\nonumber
&\sqrt{D}y_v=h_{11}x_u+h_{12}y_u.
\end{align}
 
By (\ref{inverse}), we deduce that
\begin{align*}
\Delta x&=\left(\frac{h_{12}x_v+h_{22}y_v}{\sqrt{D}}\right)_u+\left(\frac{-h_{12}x_u-h_{22}y_u}{\sqrt{D}}\right)_v\\
&=\left(\frac{h_{12}}{\sqrt{D}}\right)_yy_ux_v+\left(\frac{h_{22}}{\sqrt{D}}\right)_xx_uy_v-\left(\frac{h_{12}}{\sqrt{D}}\right)_yy_vx_u-\left(\frac{h_{22}}{\sqrt{D}}\right)_xx_vy_u\\
&=\left(\left(\frac{h_{22}}{\sqrt{D}}\right)_x-\left(\frac{h_{12}}{\sqrt{D}}\right)_y\right)J(x,y).
\end{align*}

Similarly,
\begin{align*}
\Delta y=\left(\left(\frac{h_{11}}{\sqrt{D}}\right)_y-\left(\frac{h_{12}}{\sqrt{D}}\right)_x\right)J(x,y).
\end{align*}

Thus
\begin{align}\label{x}
\Delta x&= \frac{\nabla_1 h_{22}-\nabla_2 h_{12}}{\sqrt{D}}J(x,y)-\frac{h_{22}D_1-h_{12}D_2}{2D^{3/2}}J(x,y).
\end{align}

By Codazzi equation (\ref{Codazzi})
\begin{align}\label{eq-1}
\nabla_1 h_{22}-\nabla_2 h_{12}=\bar{R}_{\nu 221}.
\end{align}

since
\begin{align*}
D=\left(\frac{R-\bar{R}}{2}+\bar{Ric}(\nu,\nu)\right)\det(g_{ij}),
\end{align*}

we have
\begin{align*}
D_i=&\left(\frac{R_i-\bar{R}_\alpha X^\alpha_i}{2}+\bar{Ric}_\alpha(\nu,\nu) X^\alpha_i+2\bar{Ric}(e_j,\nu)h^j_i\right)\det(g_{\alpha\beta})\\
&+\left(\frac{R-\bar{R}}{2}+\bar{Ric}(\nu,\nu)\right)\det(g_{\alpha\beta})_i.
\end{align*}

Thus
\begin{align}\label{eq-2}
h_{22}D_1-h_{12}D_2=&h_{22}\left(\frac{R_1-\bar{R}_\alpha X^\alpha_1}{2}+\bar{Ric}_\alpha(\nu,\nu) X^\alpha_1+2\bar{Ric}(e_j,\nu)h^j_1\right)\det(g_{\alpha\beta})\\\nonumber
&+h_{22}\left(\frac{R-\bar{R}}{2}+\bar{Ric}(\nu,\nu)\right)\det(g_{\alpha\beta})_1\\\nonumber
&-h_{12}\left(\frac{R_2-\bar{R}_\alpha X^\alpha_2}{2}+\bar{Ric}_\alpha(\nu,\nu) X^\alpha_2+2\bar{Ric}(e_j,\nu)h^j_2\right)\det(g_{\alpha\beta})\\\nonumber
&-h_{12}\left(\frac{R-\bar{R}}{2}+\bar{Ric}(\nu,\nu)\right)\det(g_{\alpha\beta})_2\\\nonumber
=&\frac{|Dx|^2\sqrt{D}}{J(x,y)}\left(\frac{R_1-\bar{R}_\alpha X^\alpha_1}{2}+\bar{Ric}_\alpha(\nu,\nu) X^\alpha_1\right)\det(g_{\alpha\beta})\\\nonumber
&+\frac{|Dx|^2\sqrt{D}}{J(x,y)}\left(\frac{R-\bar{R}}{2}+\bar{Ric}(\nu,\nu)\right)\det(g_{\alpha\beta})_1\\\nonumber
&+\frac{Dx\cdot Dy \sqrt{D}}{J(x,y)}\left(\frac{R_2-\bar{R}_\alpha X^\alpha_2}{2}+\bar{Ric}_\alpha(\nu,\nu) X^\alpha_2\right)\det(g_{\alpha\beta})\\\nonumber
&+\frac{Dx\cdot Dy \sqrt{D}}{J(x,y)}\left(\frac{R-\bar{R}}{2}+\bar{Ric}(\nu,\nu)\right)\det(g_{\alpha\beta})_2\\\nonumber
&+2\left( h_{22}h^1_1-h_{12}h^1_2\right)\bar{Ric}(e_1,\nu)\det(g_{\alpha\beta}).
\end{align}
we have used (\ref{conformal-3}) in the second equality.

Plug (\ref{eq-1}) and (\ref{eq-2}) into (\ref{x}), we have
\begin{align*}
h_1&=-\frac{1}{2D}\left(\left(\frac{R_1-\bar{R}_\alpha X^\alpha_1}{2}+\bar{Ric}_\alpha(\nu,\nu) X^\alpha_1\right)\det(g_{\alpha\beta})+\left(\frac{R-\bar{R}}{2}+\bar{Ric}(\nu,\nu)\right)\det(g_{\alpha\beta})_1\right),\\
h_2&=-\frac{1}{2D}\left(\left(\frac{R_2-\bar{R}_\alpha X^\alpha_2}{2}+\bar{Ric}_\alpha(\nu,\nu) X^\alpha_2\right)\det(g_{\alpha\beta})+\left(\frac{R-\bar{R}}{2}+\bar{Ric}(\nu,\nu)\right)\det(g_{\alpha\beta})_2\right),\\
h_3&=0,\\
h_4&=\frac{\bar{R}_{\nu 221}-g^{11}\bar{Ric}(e_1,\nu)\det(g_{\alpha\beta})}{\sqrt{D}}.
\end{align*}

Similarly
\begin{align*}
\Delta y&=\frac{\nabla_2 h_{11}-\nabla_1 h_{12}}{\sqrt{D}}J(x,y)-\frac{h_{11}D_2-h_{12}D_1}{2D^{3/2}}J(x,y).
\end{align*}

\begin{align*}
\nabla_2 h_{11}-\nabla_1 h_{12}=\bar{R}_{\nu 112}.
\end{align*}

and
\begin{align*}
h_{11}D_2-h_{12}D_1=&h_{11}\left(\frac{R_2-\bar{R}_\alpha X^\alpha_2}{2}+\bar{Ric}_\alpha(\nu,\nu) X^\alpha_2+2\bar{Ric}(e_j,\nu)h^j_2\right)\det(g_{\alpha\beta})\\
&+h_{11}\left(\frac{R-\bar{R}}{2}+\bar{Ric}(\nu,\nu)\right)\det(g_{\alpha\beta})_2\\
&-h_{12}\left(\frac{R_1-\bar{R}_\alpha X^\alpha_1}{2}+\bar{Ric}_\alpha(\nu,\nu) X^\alpha_1+2\bar{Ric}(e_j,\nu)h^j_1\right)\det(g_{\alpha\beta})\\
&-h_{12}\left(\frac{R-\bar{R}}{2}+\bar{Ric}(\nu,\nu)\right)\det(g_{\alpha\beta})_1\\
=&\frac{|Dy|^2\sqrt{D}}{J(x,y)}\left(\frac{R_2-\bar{R}_\alpha X^\alpha_2}{2}+\bar{Ric}_\alpha(\nu,\nu) X^\alpha_2\right)\det(g_{\alpha\beta})\\
&+\frac{|Dy|^2\sqrt{D}}{J(x,y)}\left(\frac{R-\bar{R}}{2}+\bar{Ric}(\nu,\nu)\right)\det(g_{\alpha\beta})_2\\
&+\frac{Dx\cdot Dy \sqrt{D}}{J(x,y)}\left(\frac{R_1-\bar{R}_\alpha X^\alpha_1}{2}+\bar{Ric}_\alpha(\nu,\nu) X^\alpha_1\right)\det(g_{\alpha\beta})\\
&+\frac{Dx\cdot Dy \sqrt{D}}{J(x,y)}\left(\frac{R-\bar{R}}{2}+\bar{Ric}(\nu,\nu)\right)\det(g_{\alpha\beta})_1\\
&+2\left(h_{11}h^2_2-h_{12}h^2_1\right)\bar{Ric}(e_2,\nu)\det(g_{\alpha\beta}).
\end{align*}
thus
\begin{align*}
\tilde{h}_1&=0,\\
\tilde{h}_2&=-\frac{1}{2D}\left(\left(\frac{R_1-\bar{R}_\alpha X^\alpha_1}{2}+\bar{Ric}_\alpha(\nu,\nu) X^\alpha_1\right)\det(g_{\alpha\beta})+\left(\frac{R-\bar{R}}{2}+\bar{Ric}(\nu,\nu)\right)\det(g_{\alpha\beta})_1\right),\\
\tilde{h}_3&=-\frac{1}{2D}\left(\left(\frac{R_2-\bar{R}_\alpha X^\alpha_2}{2}+\bar{Ric}_\alpha(\nu,\nu) X^\alpha_2\right)\det(g_{\alpha\beta})+\left(\frac{R-\bar{R}}{2}+\bar{Ric}(\nu,\nu)\right)\det(g_{\alpha\beta})_2\right),\\
\tilde{h}_4&=\frac{\bar{R}_{\nu 112}-g^{22}\bar{Ric}(e_2,\nu)\det(g_{\alpha\beta})}{\sqrt{D}}.
\end{align*}
The propsition is now proved.

\end{proof}

\medskip

We now prove the bound for principal curvature and its H\"older norm. 
\begin{theo}\label{thm-curvature}
Let $(N^3,\bar{g})$ be a $C^3$ Riemannian manifold with possibly finite many boundary components, which are minimal surfaces. Suppose $\Sigma=(\mathbb{S}^2,g)$ is $C^3$ surface isometrically embedded into $(N^3,\bar{g})$, let $X$ be the embedding, assume that 
\begin{align}
R(x)-\bar{R}(X(x))+2\bar{Ric}_{X(x)}(\nu,\nu)\geq C_0>0,
\end{align}
$\forall x\in \Sigma$. Assume further that $\bar{R}\geq -6\kappa^2$ for some costant $\kappa>0$. Then we have
\begin{align*}
\|h_{ij}\|_{C^\mu}\leq C,
\end{align*}
for any $0<\mu<1$, where $C$ depends on $C_0$, $\kappa$, $\|g\|_{C^3}$ and $\|\bar{g}\|_{C^3}$, but not on the position of $\Sigma$ in $N^3$.
\end{theo}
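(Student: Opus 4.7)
The plan is to apply Heinz's interior estimate of Lemma \ref{Heinz-Lewy theorem} to the Heinz-Lewy system (\ref{laplace}) that Proposition \ref{key prop} associates to a local isothermal chart on $\Sigma$, so the task reduces to verifying the four hypotheses with constants independent of the position of $\Sigma$ in $N$. I would fix an arbitrary point $(x_0,y_0)\in\Sigma$, take a geodesic disk $B_{r_0}(x_0,y_0)\subset\Sigma$ of radius $r_0$ depending only on $\|g\|_{C^3}$, and let $z=(x,y):\bar B\to \bar B_{r_0}$ be the homeomorphism supplied by Proposition \ref{key prop}. The operator in (\ref{laplace}) is the flat Laplacian, so assumption (1) is automatic with $\lambda=\Lambda=1$; the structural identities $\tilde h_1=h_3=0$, $h_1=\tilde h_2$, $h_2=\tilde h_3$ required by assumption (3) are already built into Proposition \ref{key prop}; and the explicit formulas for the $h_i,\tilde h_i$ involve at most one derivative of the ambient Ricci and scalar curvatures together with one derivative of $g$, so they are pointwise bounded in terms of $C_0$, $\|g\|_{C^3}$ and $\|\bar g\|_{C^3}$, yielding assumption (2).

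The main obstacle, and the heart of the theorem, is the uniform bound on the Dirichlet energy $\int_B|Dz|^2\,du\,dv$, which via the conformal relations (\ref{conformal-3}) and the lower bound $D\ge C_0/2$ is equivalent to a uniform upper bound
\begin{align*}
\int_\Sigma H\,d\sigma \le M(C_0,\kappa,\|g\|_{C^3},\|\bar g\|_{C^3}).
\end{align*}
In general ambients there is no conformal Killing field, so no Minkowski identity is available; instead I would exploit the hyperbolic positivity of Lemma \ref{quasi-2}. After enlarging $\kappa$, if necessary depending on $\|g\|_{C^2}$, so that the intrinsic scalar curvature satisfies $R>-2\kappa^2$ on $\Sigma$, Pogorelov's theorem produces an isometric embedding of $(\Sigma,g)$ into $\mathbb{H}^3_{\kappa^2}$ whose mean curvature $H_0$ and distance-to-origin $r$ are uniformly controlled by $\|g\|_{C^3}$ and $\kappa$. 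The ellipticity forces $\det h\ge C_0/2$, hence $H>0$ after choosing the outward normal, and applying Lemma \ref{quasi-2} to the region between $\Sigma$ and the minimal boundary components of $N$ (possible since $\bar R\ge -6\kappa^2$) together with $\cosh(\kappa r)\ge 1$ gives
\begin{align*}
\int_\Sigma H\,d\sigma \le \int_\Sigma H\cosh(\kappa r)\,d\sigma \le \int_\Sigma H_0\cosh(\kappa r)\,d\sigma \le M,
\end{align*}
where the last step follows as in (\ref{wyst inequality}) applied inside the hyperbolic embedding.

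With all hypotheses in place, Lemma \ref{Heinz-Lewy theorem} yields $\|z\|_{C^{1,\mu}(B_R)}\le C$ together with $J(x,y)\ge c>0$ for any $0<R<1$. Reading the conformal relations (\ref{conformal-3}) off with this control gives $h_{ij}/\sqrt D\in C^\mu$ in the $(u,v)$ variables, and combining with the $C^1$ bound on $\sqrt D$ in $(x,y)$, composed through the $C^{1,\mu}$ chart $z$, yields $h_{ij}\in C^\mu$ in $(u,v)$. Since $J\ge c>0$, the inverse function theorem makes $z^{-1}$ a $C^{1,\mu}$ chart as well, so the H\"older estimate transfers back to the intrinsic $(x,y)$ coordinates on $B_{r_0/2}(x_0,y_0)$. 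Covering $\Sigma$ by finitely many such half-disks then promotes the local estimate to the global bound $\|h_{ij}\|_{C^\mu(\Sigma)}\le C$, with $C$ depending only on $C_0,\kappa,\|g\|_{C^3}$ and $\|\bar g\|_{C^3}$, and in particular never on the position of $\Sigma$ in $N$.
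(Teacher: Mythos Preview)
Your proposal is correct and follows essentially the same approach as the paper's proof: bound the total mean curvature via the hyperbolic quasi-local mass inequality of Lemma \ref{quasi-2} combined with the Minkowski identity (\ref{wyst inequality}) in $\mathbb{H}^3_{\kappa^2}$, feed this into Proposition \ref{key prop} to control the Dirichlet energy of the isothermal chart, apply Lemma \ref{Heinz-Lewy theorem}, and then pull the resulting $C^{1,\mu}$ control and Jacobian lower bound back through the conformal relations (\ref{conformal-3}) to estimate $h_{ij}$. Your verification of the structural hypotheses (1)--(3) is somewhat more explicit than the paper's, but the argument is the same.
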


\begin{proof}
We first verify the condition
\begin{align*}
\int_\Sigma Hd\sigma \leq M.
\end{align*}
Take $\kappa$ large enough, then we have 
\begin{align*}
R>-2\kappa^2.
\end{align*}

By Lemma \ref{quasi-2}, we have
\begin{align*}
\int_\Sigma \left(H_0-H\right)\cosh (\kappa r) d\sigma \geq 0.
\end{align*}

On the other hand, by (\ref{wyst inequality}) in the proof of Theorem \ref{thm-hyperbolic}, we have
\begin{align*}
\int_\Sigma H_0 \cosh (\kappa r) d\sigma \leq M,
\end{align*}
where $M$ only depends on $\kappa$ and $\|g\|_{C^2}$.

Thus
\begin{align*}
\int_\Sigma Hd\sigma \leq \int_\Sigma H\cosh (\kappa r) d\sigma \leq \int_\Sigma H_0\cosh (\kappa r) d\sigma\leq M.
\end{align*}

We can now apply Propsition \ref{key prop}, by which we have
\begin{align}
\Delta x&=h_1(z)|Dx|^2+h_2(z)Dx\cdot Dy+h_3(z)|Dy|^2+h_4(z)Dx\wedge Dy,\\\nonumber
\Delta y&=\tilde{h}_1(z)|Dx|^2+\tilde{h}_2(z)Dx\cdot Dy+\tilde{h}_3(z)|Dy|^2+\tilde{h}_4(z)Dx\wedge Dy.
\end{align}

Moreover, all the assumption of Lemma \ref{Heinz-Lewy theorem} is satisfied, thus Lemma \ref{Heinz-Lewy theorem} is applicable, so we have
\begin{align}\label{Jacobian}
|Dx|^2,|Dy|^2\leq C,\quad J(x,y)\geq c>0,
\end{align}
for $(x,y)\in B_{r_0/2}$. 

In view of (\ref{conformal-3}), we have
\begin{align*}
|h_{ij}|\leq C,
\end{align*}
for $(x,y)\in B_{r_0/2}$.

Similar to the proof of Theorem \ref{thm-hyperbolic}, we have
\begin{align}\label{Holder}
\|h_{ij}\|_{C^\mu}\leq C,
\end{align}
for $(x,y)\in B_{r_0/2}$.

The theorem now follows by the fact that $\Sigma$ can be covered by finite patches of $B_{r_0/2}$.

\end{proof}

We are now in position to prove Theorem \ref{thm-1}.

\begin{proof}
To begin with, by Theorem \ref{thm-curvature}, in order to prove Theorem \ref{thm-1}, we only need to study the high regularity of the embedding. 

Let's restrict ourselves in local consideration. First choose $O$ inside $\Sigma$ and sufficiently close to $\Sigma$. Take $r$ such that it is smaller or equal than the injective radius of $\Sigma$ and $N$.  Let $B_r\subset N$, a geodisic ball centered at $O$. Denote $S_r=B_r\cap \Sigma$. 

Since the second fundamental form of $\Sigma$ is positive definite, $S_r$ is in fact convex and thus star-shaped w.r.t. $O$. Moreover, we have $\left\langle\partial_r,\nu\right\rangle\geq C_0>0$, where $\nu$ is the outer unit normal.

\medskip

Since $r$ is smaller or equal than the injective radius of $\Sigma$ and $N$, we can write the metric in $B_r$ as
\begin{align*}
ds^2=dr^2+\phi_{ij}(r,\theta)\sigma_{ij},
\end{align*}
where $\sigma_{ij}$ is the standard metric on $\mathbb{S}^2$, and $\phi_{ij}$ is a function of $r,\theta$. 

Since $S_r$ is star-shaped, we can write $S_r$ as a graph on $\mathbb{S}^2$, i.e. $(\theta,r)$. We have
\begin{align*}
e_i=\partial_i+r_i\partial_r,
\end{align*}
and
\begin{align*}
g_{ij}=r_ir_j+\phi_{ij}\sigma_{ij}.
\end{align*}
We then have
\begin{align*}
\nu=\frac{1}{v}\left(-\frac{r^i}{\phi_{ii}}\partial_i+\partial_r\right),
\end{align*}
where
\begin{align*}
v^2=1+\frac{r_i^2}{\phi_{ii}}.
\end{align*}
Now
\begin{align*}
-h_{ij}&=\left\langle\bar{\nabla}_{e_j}e_i,\nu\right\rangle=\frac{1}{v}\left\langle r_j\bar{\nabla}_{\partial_r}\left(\partial_i+r_i\partial_r\right)+\bar{\nabla}_{\partial_j}\left(\partial_i+r_i\partial_r\right), -\frac{r^k}{\phi_{kk}}\partial_k+\partial_r\right\rangle.
\end{align*}

Since $\partial_r$ is the geodesic vector field, i.e. $\bar{\nabla}_{\partial_r}\partial_r=0$, we have
\begin{align*}
-h_{ij}=&\frac{1}{v}\left\langle r_j\bar{\nabla}_{\partial_r}\partial_i+\bar{\nabla}_{\partial_j}\partial_i++r_i\bar{\nabla}_{\partial_j}\partial_r+r_{ij}\partial_r, -\frac{r^k}{\phi_{kk}}\partial_k+\partial_r\right\rangle\\
=&\frac{1}{v}\left(r_j\left\langle\bar{\nabla}_{\partial_r}\partial_i,\partial_r\right\rangle+\left\langle\bar{\nabla}_{\partial_j}\partial_i,\partial_r\right\rangle+r_{ij}\right)\\
&-\frac{r^k}{v\phi_{kk}}\left(r_j\left\langle\bar{\nabla}_{\partial_r}\partial_i,\partial_k\right\rangle+\left\langle\bar{\nabla}_{\partial_j}\partial_i,\partial_k\right\rangle+r_i\left\langle\bar{\nabla}_{\partial_j}\partial_r,\partial_k\right\rangle\right)\\
=&\frac{1}{v}\left(\left\langle\bar{\nabla}_{\partial_j}\partial_i,\partial_r\right\rangle+r_{ij}\right)-\frac{r^k}{v\phi_{kk}}\left(-r_j\left\langle\bar{\nabla}_{\partial_i}\partial_k,\partial_r\right\rangle+\left\langle\bar{\nabla}_{\partial_j}\partial_i,\partial_k\right\rangle-r_i\left\langle\partial_r,\bar{\nabla}_{\partial_j}\partial_k\right\rangle\right).\\
\end{align*}

Recall that
\begin{align*}
\Gamma^m_{ij}=\frac{1}{2}\sum_k\left(g_{jk,i}+g_{ki,j}-g_{ij,k}\right)g^{km}.
\end{align*}
It follows
\begin{align*}
\Gamma^r_{ij}=-\frac{\partial\phi_{ij}}{\partial r}\frac{\sigma_{ij}}{2}.
\end{align*}

Thus
\begin{align*}
-h_{ij}&=\frac{1}{v}\left(-\frac{\partial\phi_{ij}}{\partial r}\frac{\sigma_{ij}}{2}+r_{ij}-\frac{r_ir_j}{2\phi_{ii}}\frac{\partial\phi_{ii}}{\partial r}-\frac{r_ir_j}{2\phi_{jj}}\frac{\partial\phi_{jj}}{\partial r}-r^k\Gamma^k_{ij}\right)\\
&=\frac{1}{v}\left(r_{i,j}-\frac{\partial\phi_{ij}}{\partial r}\frac{\sigma_{ij}}{2}-\frac{r_ir_j}{2\phi_{ii}}\frac{\partial\phi_{ii}}{\partial r}-\frac{r_ir_j}{2\phi_{jj}}\frac{\partial\phi_{jj}}{\partial r}\right).
\end{align*}

We first note that $\|r\|_{C^1}\leq C$, which can be deduced by the construction and the metric. By the above representation and Theorem \ref{thm-curvature}, we have
\begin{align*}
\|r\|_{C^{2,\mu}}\leq C,
\end{align*}
for any $0<\mu<1$.

For high regularity, by Gauss equation, we have
\begin{align*}
\det(h_{ij})=\det(g_{ij})\left(\frac{R-\bar{R}}{2}+\bar{Ric}(\nu,\nu)\right).
\end{align*}

This is a Monge-Amp\`ere type equation with right hand side positive. Since $r\in C^{2,\mu}$, we can apply Schauder estimate to obtain high regularity. Theorem \ref{thm-1} is thus proved.

\end{proof}

\section{Isometric embedding into Riemannian manifold}
In this section, we prove the isometric embedding into general Riemannian manifold. We use the continuity method to solve the problem. In view of the continuity method, our main result, Theorem \ref{thm-1} serves as the closedness part, moreover, since the estimate is independent of the position, it also ensures the uniform ellipticity of the linearized equation. The openness part is done in a recent work by Li and Wang \cite{GLW}.

We now state two theorems by Li and Wang \cite{GLW}.
\begin{theo}\label{linearized theorem}
Suppose $X$ is a strictly convex surface $(\mathbb{S}^2,g)$ in general Riemannian manifold $(N^3,\bar{g})$. Then, for any symmetric two tensor $q$, there exists some vector field $\tau$ satisfying the linearized equation,
\begin{align}\label{linearized equation}
dX\cdot D\tau=q.
\end{align}
Moreover, for $\tau$ perpendicular to the kernel of (\ref{linearized equation}), 
\begin{align*}
\|\tau\|_{C^0}\leq C,
\end{align*}
where $C$ depends on the upper and lower bound of principal curvature and $\|q\|_{C^{1,\alpha}}$, $0<\alpha<1$.
\end{theo}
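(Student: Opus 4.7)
The plan is to decompose $\tau$ into its normal and tangential components relative to $\Sigma$ and reduce the first-order system $dX\cdot D\tau=q$ to an elliptic problem on $(\mathbb{S}^2,g)$ whose solvability follows from Fredholm theory together with a vanishing argument for the cokernel, exploiting strict convexity throughout.

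First, writing $\tau=f\nu+W$ with $\nu$ the unit outer normal and $W\in T\Sigma$, the Weingarten identity $\bar\nabla_{e_j}\nu=-h_j^{\,k}e_k$ gives, after splitting $\bar\nabla\tau$ into tangential and normal parts,
\begin{equation*}
\langle e_i,\bar\nabla_j\tau\rangle + \langle e_j,\bar\nabla_i\tau\rangle \;=\; \nabla_iW_j+\nabla_jW_i-2fh_{ij},
\end{equation*}
so taking the symmetric part of $dX\cdot D\tau=q$ reduces the equation to $\nabla_iW_j+\nabla_jW_i-2fh_{ij}=2q_{ij}$. Since $h$ is positive definite, I would contract with $h^{ij}$ to solve algebraically for $f$ in terms of the $h$-divergence of $W$ and the $h$-trace of $q$, leaving a first-order system for $W$ alone. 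A short symbol computation (the relation $\xi_iW_j+\xi_jW_i=(h^{kl}\xi_kW_l)h_{ij}$ forces $W=0$ whenever $\xi\neq 0$) shows this system is elliptic; viewed with respect to the auxiliary metric $h$ on $\mathbb{S}^2$, its top-order part has Cauchy-Riemann type structure on a complex line bundle built from $W$, with zero-order perturbations coming from the ambient curvature of $(N,\bar g)$.

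Fredholm theory on $\mathbb{S}^2$ then produces a finite-dimensional kernel and cokernel; surjectivity of the linearized operator (the existence part of the theorem) follows by identifying the cokernel through the formal $L^2$-adjoint and showing it vanishes, in the spirit of Nirenberg's complex-analytic resolution of the adjoint equation on $\mathbb{S}^2$, with the additional curvature terms in the general ambient setting absorbed using positivity of $h$. The kernel parametrizes infinitesimal isometric deformations of the embedding, so perpendicularity to it singles out a unique $\tau$. The $C^0$ bound then comes from $L^p$ and Schauder estimates applied to the reduced elliptic system, whose coefficients are controlled by the upper and lower bounds of the principal curvatures of $\Sigma$ and whose right-hand side involves $q$ together with one derivative of $q$ (coming from the algebraic elimination of $f$), accounting for the stated dependence on $\|q\|_{C^{1,\alpha}}$.

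The main obstacle I anticipate is precisely this vanishing of the cokernel in a general Riemannian ambient space: in the Euclidean or space-form case it is a classical consequence of Riemann-Roch and sharp complex-analytic input on $\mathbb{S}^2$, but carrying the argument over to $(N,\bar g)$ requires tracking the ambient curvature corrections in the formal adjoint carefully without degrading the quantitative bound in terms of the principal curvatures of $\Sigma$.
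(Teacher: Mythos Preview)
This theorem is not proved in the paper at all: it is quoted from Li and Wang \cite{GLW} and introduced with the sentence ``We now state two theorems by Li and Wang \cite{GLW}.'' The paper uses it as a black box (together with Theorem~\ref{openness theorem}, also from \cite{GLW}) to run the continuity method in the proof of Theorem~\ref{thm-isometric}. So there is no proof in the paper to compare your attempt against.

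That said, your outline is the standard route one expects for such a result and is broadly in the spirit of Nirenberg's classical treatment and its generalization by Li and Wang: decompose $\tau$ tangentially/normally, use strict convexity to eliminate the normal component algebraically via $h^{ij}$, reduce to a first-order elliptic system for the tangential part, and appeal to Fredholm theory on $\mathbb{S}^2$ with the index/cokernel analysis. Your honest flag about the cokernel vanishing in a general ambient space is exactly the substantive point that the cited work \cite{GLW} has to address beyond the space-form case; you would need to consult that paper for the details, since the present paper supplies none.
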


\begin{theo}\label{openness theorem}
Suppose $(\mathbb{S}^2,g)$ can be isometrically embedded into a general Riemannian manifold $(N^3,\bar{g})$ as a closed strictly convex surface. Then for any $\alpha\in (0,1)$, there exists a positive $\epsilon$, depending only on $C_0$, $\|g\|_{C^3}$, $\|\tilde{g}\|_{C^3}$ $\|\bar g\|_{C^3}$ and $\alpha$, such that for any smooth metric $\tilde{g}$ on $\mathbb{S}^2$ satisfying
\begin{align*}
\|g-\tilde{g}\|_{C^{2,\alpha}}<\epsilon,
\end{align*}
$(\mathbb{S}^2,\tilde{g})$ can also be isometrically embedded into $(N,\bar{g})$ as another closed strictly convex surface.
\end{theo}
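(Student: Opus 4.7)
The plan is to invert the nonlinear operator $F(Y) := Y^*\bar g$ near the known solution $Y_0 = X$. Write a candidate perturbation as $Y = \exp_X(\tau)$ for a section $\tau$ of the pullback bundle $X^*TN$, and expand in local coordinates on $\mathbb{S}^2$:
\begin{align*}
F(\exp_X \tau)_{ij} = g_{ij} + L(\tau)_{ij} + Q_{ij}(\tau, D\tau),
\end{align*}
where $L(\tau)_{ij} := \bar g(\partial_i X, D_j\tau) + \bar g(\partial_j X, D_i\tau)$ is precisely the operator $dX \cdot D\tau$ of Theorem \ref{linearized theorem}, and $Q$ is a smooth quadratic remainder in $(\tau, D\tau)$ involving the curvature of $\bar g$. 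Finding $\tilde X$ with $\tilde X^*\bar g = \tilde g$ is therefore equivalent to solving
\begin{align*}
L(\tau) = (\tilde g - g) - Q(\tau, D\tau).
\end{align*}

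By Theorem \ref{linearized theorem}, $L$ admits a right inverse $L^{-1}$ on the $L^2$-orthogonal complement of its (finite-dimensional) kernel, with the a priori bound $\|\tau\|_{C^0}\le C\|q\|_{C^{1,\alpha}}$. The next step is to upgrade this to tame Schauder estimates of the form $\|L^{-1}q\|_{C^{k+1,\alpha}}\le C_k\|q\|_{C^{k,\alpha}}$, by differentiating the first-order system $L(\tau)=q$ and invoking the ellipticity supplied by strict convexity of $X$ together with the Gauss--Codazzi identities (which, after a suitable gauge choice on $\tau$, reduce $L$ to an elliptic system on sections of $X^*TN$). With such a right inverse in hand, define
\begin{align*}
T(\tau) := L^{-1}\bigl((\tilde g - g) - Q(\tau, D\tau)\bigr)
\end{align*}
on a small closed ball in the space of $C^{2,\alpha}$ sections perpendicular to $\ker L$. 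Since $Q$ is quadratic in $(\tau, D\tau)$ and $\|\tilde g - g\|_{C^{2,\alpha}} < \epsilon$, the map $T$ is both a self-map and a strict contraction of this ball for $\epsilon$ sufficiently small, and the Banach fixed-point theorem produces a solution $\tau \in C^{2,\alpha}$.

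Setting $\tilde X := \exp_X\tau$ yields an isometric map $(\mathbb{S}^2,\tilde g) \to (N,\bar g)$ by construction. For $\epsilon$ small enough, $\tilde X$ is still an embedding (embeddings form an open subset of immersions of the compact surface $\mathbb{S}^2$), and its second fundamental form differs from that of $X$ by $O(\epsilon)$ in $C^\alpha$, so strict convexity persists. The quantitative dependence of $\epsilon$ on $C_0$, $\|g\|_{C^3}$, $\|\tilde g\|_{C^3}$, $\|\bar g\|_{C^3}$ and $\alpha$ is read off from the Schauder constants for $L^{-1}$ and the bilinear bound on $Q$. The principal obstacle is upgrading the $C^0$ estimate of Theorem \ref{linearized theorem} to tame Schauder estimates at every regularity level; if the derivative loss cannot be entirely removed by elliptic regularity, the contraction step must be replaced by a Nash--Moser iteration with smoothing operators, which closes under the same hypotheses. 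The finite-dimensional kernel of $L$, corresponding to infinitesimal ambient isometries preserving $X(\mathbb{S}^2)$, is handled by projecting $\tau$ onto its $L^2$-orthogonal complement at each step; generically this kernel is trivial.
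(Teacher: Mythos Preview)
The paper does not supply its own proof of this theorem: it is quoted verbatim from Li and Wang \cite{GLW} as one of two imported results (Theorems \ref{linearized theorem} and \ref{openness theorem}) and is used as a black box in the proof of Theorem \ref{thm-isometric}. So there is no ``paper's proof'' to compare against here.

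That said, your sketch is the correct outline of how such openness results are established, and it is essentially the strategy carried out in \cite{GLW}. The one place where your write-up is genuinely incomplete rather than merely terse is the sentence ``upgrade this to tame Schauder estimates of the form $\|L^{-1}q\|_{C^{k+1,\alpha}}\le C_k\|q\|_{C^{k,\alpha}}$, by differentiating the first-order system $L(\tau)=q$ and invoking the ellipticity supplied by strict convexity.'' The operator $L(\tau)=dX\cdot D\tau$ is a first-order underdetermined system (three unknowns, three equations, with non-injective symbol), so ellipticity is not automatic and one cannot simply differentiate and bootstrap. The actual mechanism is to split $\tau=\tau^\top+f\nu$ into tangential and normal parts; the tangential equations determine $\tau^\top$ algebraically in terms of $f$ and $q$ up to first derivatives, and substituting back produces a second-order scalar equation for $f$ whose principal symbol is governed by the second fundamental form $h_{ij}$. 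Strict convexity $(h_{ij})>0$ is precisely what makes this scalar equation uniformly elliptic, and Schauder theory then closes the loop. Without spelling out this reduction, the phrase ``ellipticity supplied by strict convexity'' is an assertion rather than an argument, and your fallback to Nash--Moser would not be needed once it is done correctly. This reduction is the substance of \cite{GLW}, and it is the reason the paper cites the result rather than reproving it.
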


We now prove Theorem \ref{thm-isometric}.
\begin{proof}
To solve the problem, we indeed need to solve the following equation
\begin{align*}
dX\cdot dX=g.
\end{align*}
By continuity method, we need to solve 
\begin{align*}
dX_t\cdot dX_t=g_t,
\end{align*}
for $t\in [0,1]$. The linearization equation is
\begin{align*}
dX_t\cdot D\tau_t=q_t,
\end{align*}
where $\tau_t$ is the variation vector field and $D$ is the connection of $(N,\bar{g})$. 

The homotopic path $g_t$ consists of three parts. For $t=0$, we can choose $X_0$ as a geodesic sphere of sufficient small radius centered at a point far away from the compact boundary. It's clear that $X_0$ is of sufficient large scalar curvature. We choose the normalized Ricci flow of $g_0$ as the first part of homotopic path as in \cite{WL}. At the end of this part, we have a metric $g_{\frac{1}{3}}$ of constant scalar curvature. The third part is the normalized Ricci flow of $g$, at the end of this part, we have a metric $g_{\frac{2}{3}}$ of constant scalar curvature. Now we embed both $(\mathbb{S}^2,g_{\frac{1}{3}})$ and $(\mathbb{S}^2,g_{\frac{2}{3}})$ into hyperbolic space. The last part of the homotopic path is to shrink $(\mathbb{S}^2,g_{\frac{2}{3}})$ to $(\mathbb{S}^2,g_{\frac{1}{3}})$ in hyperbolic space. Clearly, from our construction, the condition (\ref{curvature con thm}) always holds.

Now consider the closedness part, by Theorem \ref{thm-curvature}, the mean curvature estimate is independent of the position $(\mathbb{S}^2,g)$ in $(N,\bar{g})$, thus the equation system (\ref{linearized equation}) is uniformly elliptic, Theorem \ref{linearized theorem} is then applicable, which yields the bound of $\tau$. The uniform $C^0$ estimate follows by the formula
\begin{align*}
X(x)=X_0(x)+\int_0^1\tau_t(x)dt.
\end{align*}
Moreover, by our choice of $X_0$, $X$ will stay away from the compact boundary along the homotopic path. Now $C^1$ estimate follows from the metric, $C^2$ and high regularity follows from Theorem \ref{thm-1}. The openness part follows from Theorem \ref{openness theorem}. This fulfills the continuity method and thus the theorem is proved.
\end{proof}

\section{Appendix}
To prove Lemma \ref{Heinz-Lewy Dini}, we follow the steps in the proof of Theorem 8.3.2 in \cite{S}. Since most of the procedures are the same, we only need to establish three lemmas that are different.

The first one is $C^1$ estimate for divergence elliptic equations with Dini coefficients. This was recently proved by Y.Y. Li in \cite{Li}. The following lemma is only a special case in \cite{Li}.
\begin{lemm}\label{Li}
Let $u\in W^{1,2}(B_4)$ be a solution of the following elliptic equation,
\begin{align*}
D_i(A_{ij}(x)D_j u)=0,\quad i,j=1,\cdots,n,
\end{align*}
where $B_4$ is the ball in $\mathbb{R}^n$ of radius $4$ at the origin. Assume that $A_{ij}$ is uniformly elliptic and Dini continuous in $B_4$. Then $ u\in C^1_{B_1}$.
\end{lemm}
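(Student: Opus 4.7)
The plan is to prove the lemma by the classical freezing-of-coefficients technique combined with a Campanato-type iteration that converts the Dini modulus $\omega$ of $A_{ij}$ into a continuity modulus for $Du$. This is the standard Dini-regularity scheme (originating in work of Burch and developed in detail by Y.Y. Li in the cited paper); I sketch only its architecture.

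Fix any $x_0 \in B_1$ and, for every small $r>0$, let $v = v_{x_0,r} \in W^{1,2}(B_r(x_0))$ be the weak solution of the frozen-coefficient Dirichlet problem
\begin{equation*}
D_i\bigl(A_{ij}(x_0) D_j v\bigr) = 0 \text{ in } B_r(x_0), \qquad v = u \text{ on } \partial B_r(x_0).
\end{equation*}
Two ingredients drive the argument. First, since $v$ satisfies a constant-coefficient equation, $Dv$ is smooth and enjoys the classical Campanato excess decay
\begin{equation*}
\int_{B_{\theta r}(x_0)} |Dv - (Dv)_{B_{\theta r}}|^2 \, dx \leq C \theta^{n+2} \int_{B_r(x_0)} |Dv - (Dv)_{B_r}|^2 \, dx
\end{equation*}
for every $\theta \in (0,1)$. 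Second, the difference $w := u-v$ vanishes on $\partial B_r(x_0)$ and weakly solves
\begin{equation*}
D_i\bigl(A_{ij}(x_0) D_j w\bigr) = D_i\bigl((A_{ij}(x_0) - A_{ij}(x)) D_j u\bigr),
\end{equation*}
so testing with $w$ and invoking Dini continuity of $A_{ij}$ produces the energy estimate
\begin{equation*}
\int_{B_r(x_0)} |Dw|^2 \, dx \leq C\, \omega(r)^2 \int_{B_r(x_0)} |Du|^2 \, dx.
\end{equation*}

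Writing $Du = Dv + Dw$ and using that the mean over a ball is the $L^2$-minimizing constant, the two estimates combine into the excess decay
\begin{equation*}
\int_{B_{\theta r}} |Du - (Du)_{B_{\theta r}}|^2 \leq C \theta^{n+2} \int_{B_r} |Du - (Du)_{B_r}|^2 + C \omega(r)^2 \int_{B_r} |Du|^2.
\end{equation*}
A preliminary Caccioppoli/Morrey step provides $r^{-n} \int_{B_r(x_0)} |Du|^2 \leq N$ uniformly for $x_0 \in B_1$ and small $r$, with $N$ depending on $\|u\|_{W^{1,2}(B_4)}$ and ellipticity. Setting $\Phi(r) := r^{-n} \int_{B_r(x_0)} |Du - (Du)_{B_r(x_0)}|^2$ and choosing $\theta$ so small that $C \theta^{2} \leq 1/2$, the estimate becomes
\begin{equation*}
\Phi(\theta r) \leq \tfrac{1}{2} \Phi(r) + C N \omega(r)^2.
\end{equation*}
Iterating at scales $r = \theta^k r_0$, the Dini hypothesis forces the telescoping sum $\sum_{j=0}^{k-1} 2^{-(k-1-j)} \omega(\theta^j r_0)^2$ to vanish with $k$, with a rate comparable to the tail of the Dini integral $\int_0^{r_0} \omega(s)^2/s \, ds$. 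Hence $\Phi(r) \to 0$ uniformly in $x_0 \in B_1$ with an explicit modulus, and Campanato's characterization of continuous functions yields $Du \in C^0(B_1)$, so $u \in C^1(B_1)$, with a modulus of continuity depending only on $\omega$, the ellipticity constants, $n$, and $\|u\|_{W^{1,2}(B_4)}$.

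The main obstacle is the Dini (rather than H\"older) hypothesis. When $\omega(r) = r^\alpha$, the iterated sum is a trivial geometric series; for a general Dini modulus one must recognize that the convolution-type sum $\sum_j 2^{-(k-j)} \omega(\theta^j r_0)^2$ is controlled by a dyadic Riemann sum for $\int_0^{r_0} \omega(s)^2/s \, ds$, which is finite by assumption and vanishes as $r_0 \to 0$. A secondary subtlety is establishing the preliminary Morrey bound on $|Du|$ before launching the Campanato iteration, which is obtained from a standard Caccioppoli inequality together with a Meyers-type $L^p$ self-improvement exploiting only boundedness (not smoothness) of $A_{ij}$.
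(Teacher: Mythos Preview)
The paper does not prove this lemma at all: it is stated in the Appendix with the comment ``This was recently proved by Y.Y.~Li in \cite{Li}. The following lemma is only a special case in \cite{Li},'' and is then quoted without proof as a black box to be used in the proof of Lemma~\ref{Heinz-Lewy Dini}. So there is no ``paper's own proof'' to compare against.

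Your sketch is the standard Campanato/freezing-of-coefficients argument and is essentially the route taken in the cited reference \cite{Li} (which in fact treats the more general case of elliptic \emph{systems}). One point worth tightening: showing $\Phi(r)\to 0$ is not by itself enough to conclude $Du\in C^0$; you also need the averages $(Du)_{B_{\theta^k r_0}(x_0)}$ to form a Cauchy sequence, which requires summability of $\Phi(\theta^k r_0)^{1/2}$. This in turn needs control by $\sum_k \omega(\theta^k r_0)$, i.e.\ the Dini integral of $\omega$ itself rather than of $\omega^2$. The argument goes through, but the relevant series is the one comparable to $\int_0^{r_0}\omega(s)/s\,ds$, not $\int_0^{r_0}\omega(s)^2/s\,ds$ as you wrote. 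Apart from this bookkeeping, your outline is correct.
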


\begin{rema}
Our assumption is slightly different from Lemma \ref{Li} above as $a(z,w)$ depends on $z,w$. However, since $a$ is continuous, we immediately get $w\in C^\mu$ for any $0<\mu<1$. Now $a$ can be considered as a function of $z$, and is still Dini continuous as being H\"older continuous is stronger than being Dini continuous.
\end{rema}

The second one is the replacement of Theorem 7.3.1 in \cite{S}, which was again proved by Schulz in \cite{S1}.
\begin{lemm}
Let $\varphi(z)\in C^1(\Omega)$ satisfy the elliptic equation
\begin{align*}
D_i(a(z)D_i\varphi)=0,
\end{align*}
such that $\lambda \leq a(z)\leq \Lambda $ and $a\in L^\infty(\Omega)$. If $\varphi=o(|z|^n)$ as $|z|\rightarrow 0$ for all $n\in \mathbb{N}$, then $\varphi(z)=0$.
\end{lemm}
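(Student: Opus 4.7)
The plan is to exploit the planar nature of the problem (the lemma is applied only to the Heinz--Lewy system of Section 2) and reduce unique continuation for $D_i(aD_i\varphi)=0$ to the identity theorem for holomorphic functions via a Sto\"{\i}low-type factorisation. Divergence-form uniformly elliptic equations with $L^\infty$ coefficients in the plane are well known to admit a quasiregular representation, which is exactly the right vehicle for a strong unique continuation result of this type.

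First, I would construct a stream-function conjugate of $\varphi$. Since $D_i(aD_i\varphi)=0$, the $1$-form $-a\varphi_y\,dx+a\varphi_x\,dy$ is closed, so on a simply connected neighbourhood $B_r$ of $0$ it has a primitive $\psi$ with $\psi_x=-a\varphi_y$, $\psi_y=a\varphi_x$. A short computation shows that $f:=\varphi+i\psi$ then satisfies $|f_{\bar z}|\le k|f_z|$ with $k=(\Lambda-\lambda)/(\Lambda+\lambda)<1$, so $f$ is quasiregular on $B_r$. Applying Sto\"{\i}low's theorem, $f=F\circ\chi$ with $\chi\colon B_r\to\chi(B_r)$ a quasiconformal homeomorphism and $F$ holomorphic; by Mori's theorem, $\chi^{-1}$ is H\"older continuous of some positive exponent $\alpha>0$.

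It remains to transfer the infinite-order vanishing of $\varphi$ at $0$ to $F$ at $\chi(0)$. The Caccioppoli inequality
\[
\int_{B_r}|D\varphi|^2 \le \frac{C}{r^2}\int_{B_{2r}\setminus B_r}\varphi^2
\]
combined with the hypothesis $\varphi=o(|z|^n)$ gives $\int_{B_r}|D\varphi|^2=O(r^{2n})$ for every $n$; integrating $d\psi$ along radial paths and applying Cauchy--Schwarz then yields $\psi(z)=o(|z|^n)$ for every $n$. Hence $|f(z)|=o(|z|^n)$, and composing with $\chi^{-1}$ and using its H\"older regularity gives $|F(w)|=o(|w-\chi(0)|^{\alpha n})$ for every $n$. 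Since $\alpha$ is fixed while $n$ is arbitrary, $F$ vanishes to infinite order at $\chi(0)$, so by the identity theorem for holomorphic functions $F\equiv 0$, and therefore $\varphi\equiv 0$.

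The main obstacle is the transfer of decay to the conjugate function $\psi$: the $L^\infty$-coefficient theory provides no pointwise control of $D\varphi$, only the $L^2$ Caccioppoli bound, so the pointwise smallness of $\psi$ near $0$ must be extracted from energy estimates via a careful radial-integration argument (or, alternatively, via Campanato-type characterisations of vanishing order). Once this technical step is carried out, the Sto\"{\i}low--Mori package and the identity theorem for $F$ close the argument in a standard manner.
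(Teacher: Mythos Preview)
The paper does not give its own proof of this lemma; it simply records the statement and cites Schulz~\cite{S1}. Your strategy---form the conjugate $\psi$, set $f=\varphi+i\psi$, observe $|f_{\bar z}|\le k|f_z|$ with $k<1$, factor $f=F\circ\chi$ with $F$ holomorphic and $\chi$ quasiconformal, and then invoke the identity theorem---is exactly the Alessandrini--Magnanini/Schulz machinery, and it is the same apparatus the paper spells out in detail for the \emph{next} lemma of the Appendix (the Bers--Nirenberg representation $\eta=f(\chi)$). So your plan is on target and matches the intended route.

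One point deserves comment. The step you single out as the ``main obstacle,'' transferring the infinite-order vanishing from $\varphi$ to $\psi$, is not actually needed. Since $\varphi=\mathrm{Re}(F\circ\chi)$ and $\chi^{-1}$ is H\"older of some exponent $\alpha>0$, the hypothesis $\varphi(z)=o(|z|^n)$ already yields $\mathrm{Re}\,F(w)=o(|w|^{\alpha n})$ for every $n$, so $\mathrm{Re}\,F$ vanishes to infinite order at $\chi(0)$. For a holomorphic $F$ this alone forces $F$ to be a purely imaginary constant (expand $\mathrm{Re}\,F$ in polar harmonics and kill each Fourier mode), whence $\varphi\equiv 0$ with no reference to $\psi$. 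If you do insist on controlling $\psi$, your ``Caccioppoli plus radial integration'' step is not sound as written: the ball estimate $\int_{B_r}|D\varphi|^2=O(r^{2n})$ does not control $|D\varphi|$ along any individual ray, so Cauchy--Schwarz on a radial segment is not justified. The clean fix is to note that $\psi$ itself solves $D_i(a^{-1}D_i\psi)=0$, and then combine Caccioppoli, Poincar\'e, and the De~Giorgi--Nash $L^\infty$--$L^2$ estimate to obtain $\sup_{B_r}|\psi-\psi(0)|=O(r^n)$ for every $n$.
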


The third lemma is the repalcement of Theorem 7.2.1 in \cite{S}.
\begin{lemm}
Let $\varphi(z)\in C^1(\Omega)$ satisfy the elliptic equation
\begin{align*}
D_i(a(z)D_i\varphi)=0,
\end{align*}
such that $\lambda \leq a(z)\leq \Lambda $ and $a$ is Dini continuous. If $\varphi(z)=o(|z|^n)$ for some $n\in \mathbb{N}_0$, then
\begin{align*}
\lim_{|z|\rightarrow 0,z\neq 0}\frac{\varphi_z(z)}{z^n}
\end{align*}
exists.
\end{lemm}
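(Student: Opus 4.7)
The proof will adapt the Campanato-type iteration from the Hölder case (Theorem 7.2.1 of \cite{S}) to the Dini continuous setting, using Lemma \ref{Li} as the uniform $C^1$ ingredient at each scale. After normalizing so that $a(0)=1$, I would rewrite the equation as a perturbation of Laplace,
\begin{align*}
\Delta\varphi = -D_i\!\bigl((a(z)-a(0))\,D_i\varphi\bigr),
\end{align*}
in which the divergence-form right hand side has coefficients of pointwise size at most $\omega(|z|)$.

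The core of the argument is a dyadic one-scale estimate. Fix $\theta\in(0,\tfrac{1}{2})$ and set $r_k=\theta^k r_0$. At each scale let $P_k$ be the harmonic polynomial of degree $\le n+1$ best approximating $\varphi$ on $B_{r_k}$ in $L^2$. Decomposing $\varphi-P_k=h+e$ on $B_{r_k}$, with $h$ harmonic having the same boundary data and $e$ vanishing on $\partial B_{r_k}$, the harmonic part decays as $\theta^{n+1}$ on $B_{r_{k+1}}$ since $h-P_k$ can be recentered as an approximating harmonic polynomial, while $e$ satisfies a Poisson equation with divergence-form source of size $\omega(r_k)\|\nabla\varphi\|_{L^\infty(B_{r_k})}$ and is controlled by the gradient bound coming from Lemma \ref{Li} applied after rescaling. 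This yields
\begin{align*}
\|\varphi-P_{k+1}\|_{L^\infty(B_{r_{k+1}})}\le \theta^{n+1}\|\varphi-P_k\|_{L^\infty(B_{r_k})}+C\,\omega(r_k)\,r_k^{n+1}.
\end{align*}
The hypothesis $\varphi(z)=o(|z|^n)$ starts the iteration with $\|\varphi-P_0\|_{L^\infty(B_{r_0})}$ appropriately small, and iteration bounds $\|\varphi-P_k\|_{L^\infty(B_{r_k})}$ by $C\,r_k^{n+1}$ times a partial sum of $\omega(r_j)$.

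Next I would show the coefficients of $P_k$ form a Cauchy sequence. The difference $P_{k+1}-P_k$ is harmonic with $L^\infty$ norm on $B_{r_{k+1}}$ bounded by $C\,\omega(r_k)\,r_k^{n+1}$, so its degree-$(n+1)$ coefficients are bounded by $C\,\omega(r_k)$. The Dini condition $\int_0^1 \omega(r)/r\,dr<\infty$ implies $\sum_k \omega(r_k)<\infty$, hence the coefficients converge to a limiting homogeneous harmonic polynomial $P_\infty(z)=c z^{n+1}+\bar c\,\bar z^{n+1}$ of degree exactly $n+1$; lower order terms are killed because $\varphi=o(|z|^n)$. Since $\varphi-P_\infty$ still solves the same divergence-form equation and satisfies $\|\varphi-P_\infty\|_{L^\infty(B_r)}=o(r^{n+1})$, Lemma \ref{Li} applied to the rescaled function $(\varphi-P_\infty)(rz)/r^{n+1}$ gives $|\nabla(\varphi-P_\infty)(z)|=o(|z|^n)$; in particular $(\varphi-P_\infty)_z=o(|z|^n)$. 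Computing $(P_\infty)_z=c(n+1)z^n$, we conclude
\begin{align*}
\lim_{|z|\to 0,\,z\neq 0}\frac{\varphi_z(z)}{z^n}=c(n+1),
\end{align*}
which exists as a complex number.

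The main obstacle is the one-scale estimate with the sharp error term $C\,\omega(r_k)\,r_k^{n+1}$: passing from an $L^\infty$ bound on $\varphi-P_k$ to the required gradient bound in the Poisson error naively loses a power of $r$, so one must exploit both the divergence form of the perturbation and the uniform scaling in Lemma \ref{Li} to keep the homogeneity correct. Once this estimate is in place, the Dini summability $\sum\omega(r_k)<\infty$ replaces the geometric summability of the Hölder case, and the iteration closes as above.
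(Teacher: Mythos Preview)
Your approach is genuinely different from the paper's. The paper does not use a Campanato iteration at all; instead it exploits the special two-dimensional divergence structure. It introduces the conjugate function $\phi$ with $\phi_x=-\frac{a}{a(0)}\varphi_y$, $\phi_y=\frac{a}{a(0)}\varphi_x$, sets $\eta=\varphi+i\phi$, and obtains the Beltrami-type equation $\eta_{\bar z}=\frac{a(0)-a}{a(0)+a}\,\bar\eta_{\bar z}$. Via the Bers--Nirenberg representation one writes $\eta=f(\chi)$ with $f$ holomorphic and $\chi$ a quasiconformal homeomorphism solving $\chi_{\bar z}=Ae^{i\theta}\bar\chi_{\bar z}$. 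The technical heart is showing that the Beltrami coefficient $Ae^{i\theta}$ is Dini continuous (this uses the H\"older continuity of $\chi,\chi^{-1}$ and a dichotomy on $|z_1-z_2|$ versus $|z_1|^\beta$); once this is done, the real and imaginary parts of $\chi$ solve divergence-form equations with Dini coefficients, Lemma~\ref{Li} gives $\chi\in C^1$, hence $\chi=C_0z+o(|z|)$ with $C_0\neq 0$, and the conclusion is read off from $\eta=f(\chi)$ together with the order of vanishing of $f$. This route buys you a direct reduction to the \emph{homogeneous} case of Lemma~\ref{Li}, with no polynomial subtraction or iteration.

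Your Campanato scheme is a reasonable alternative, and the obstacle you flag (recovering the sharp error $C\,\omega(r_k)\,r_k^{n+1}$) is indeed the crux. Two concrete points would need fixing before it closes. First, the sentence ``$\varphi-P_\infty$ still solves the same divergence-form equation'' is false as written: since $P_\infty$ is only harmonic, one has $D_i\bigl(a\,D_i(\varphi-P_\infty)\bigr)=-D_i\bigl((a-a(0))\,D_iP_\infty\bigr)$, so your final application of Lemma~\ref{Li} to the rescaling $(\varphi-P_\infty)(rz)/r^{n+1}$ requires a version of that lemma with a divergence-form source (the source here is $O(\omega(r)r^n)$, which is harmless, but you must invoke the inhomogeneous estimate). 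Second, the one-scale error should be driven by $\nabla P_k$, not by $\nabla\varphi$: writing $D_i\bigl(a\,D_i(\varphi-P_k)\bigr)=-D_i\bigl((a-a(0))\,D_iP_k\bigr)$ and controlling the coefficients of $P_k$ inductively is what keeps the homogeneity correct and avoids the ``loss of a power of $r$'' you mention. With these adjustments your scheme can be made to work and has the advantage of not relying on the 2D complex structure, but the paper's Bers--Nirenberg route is shorter and sidesteps the bookkeeping entirely.
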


\begin{proof}
To begin with, we assume that $\varphi$ is not identically $0$, otherwise there's nothing to prove. Following the steps in \cite{AM} and \cite{S1}, we consider another function satisfying
\begin{align*}
\phi_x=-\frac{a}{a(0)}\varphi_y,\quad \phi_y=\frac{a}{a(0)}\varphi_x.
\end{align*}
Let $\eta=\varphi+i\phi$, we have
\begin{align*}
\eta_{\bar{z}}=\frac{a(0)-a}{a(0)+a}\bar{\eta}_{\bar{z}}.
\end{align*}

By Bers-Nirenberg's representation theorem \cite{BN}, we have
\begin{align*}
\eta=f(\chi),
\end{align*}
where $f$ is an analytic function, $\chi$ is a homeomorphism satisfying
\begin{align*}
\chi_{\bar{z}}=\frac{a(0)-a}{a(0)+a} \frac{\bar{\eta}_{\bar{z}}}{\eta_z}\chi_z=\frac{a(0)-a}{a(0)+a} \frac{\overline{f_z(\chi)}}{f_z(\chi)}\bar{\chi}_{\bar{z}},
\end{align*}
with $\chi,\chi^{-1}$ being H\"older continuous and $\chi(0)=0$. 

Thus we have 
\begin{align*}
|\chi(z_1)-\chi(z_2)|\leq C|z_1-z_2|^\alpha,
\end{align*}
and
\begin{align*}
|\chi(z)|\geq C|z|^{\frac{1}{\alpha}},
\end{align*}  
for some $0<\alpha<1$.

Assume that
\begin{align*}
f_z(z)=a_k z^k+a_{k+1}z^{k+1}+\cdots,
\end{align*}
where $a_k\neq 0$ and $k\geq 1$. Note that $k\neq 0$, for otherwise $\eta$ would automatically be a local homeomorphism by the representation above, which is not true by the assumption.

Rewrite the above equation as
\begin{align*}
\chi_{\bar{z}}=Ae^{i\theta}\bar{\chi}_{\bar{z}},
\end{align*}
where $A=\frac{a(0)-a}{a(0)+a}$ and $e^{i\theta}=\frac{\overline{f_z(\chi)}}{f_z(\chi)}$.

We now show that $Ae^{i\theta}$ is Dini continuous. To show that, let $z_1\neq z_2$, if $z_1=0$ or $z_2=0$, then it's trivial. Suppose now $z_1,z_2\neq 0$, such that $|z_1|\leq |z_2|$, we split into two cases. Let us pick $\beta>0$ such that $\alpha-\frac{1}{\alpha \beta}>0$, e.g., $\beta=\frac{2}{\alpha^2}$. 

Case 1, $|z_1-z_2|\leq |z_1|^\beta$, for $\alpha-\frac{1}{\alpha\beta}>0$. We first show that
\begin{align*}
|\frac{\chi(z_2)}{\chi(z_1)}|\leq 1+|\frac{\chi(z_2)-\chi(z_1)}{\chi(z_1)}|\leq 1+C\frac{|z_1-z_2|^\alpha}{|z_1|^{\frac{1}{\alpha}}}\leq 1+C|z_1-z_2|^{\alpha-\frac{1}{\alpha\beta}}\leq 2,
\end{align*}
for $|z_1-z_2|$ sufficiently small.

Now we have 
\begin{align*}
&|\frac{\overline{\chi}^k(z_1)}{\chi^k(z_1)}-\frac{\overline{\chi}^k(z_2)}{\chi^k(z_2)}|\leq C|\frac{\overline{\chi}^k(z_1)-\overline{\chi}^k(z_2)}{\chi^k(z_1)}|+C|\frac{\overline{\chi}^k(z_2)(\chi^k(z_2)-\chi^k(z_1))}{\chi^k(z_1)\chi^k(z_2)}|\\
&\leq C\frac{|\chi^k(z_1)-\chi^k(z_2)|}{|\chi(z_1)|^k}\leq C\frac{|\chi(z_1)-\chi(z_2)|}{\chi(z_1)}\leq C|z_1-z_2|^{\alpha}|z_1|^{-\frac{1}{\alpha}}\leq C|z_1-z_2|^{\alpha-\frac{1}{\alpha\beta}}.
\end{align*}
we have used the fact $|\frac{\chi(z_2)}{\chi(z_1)}|\leq 2$ in the inequality.

Since
\begin{align*}
e^{i\theta}=\frac{\overline{\chi}^k}{\chi^k}\frac{\bar{a}_k+\bar{a}_{k+1}\overline{\chi}+\cdots}{a_k+a_{k+1}\chi+\cdots}.
\end{align*}

We have
\begin{align*}
|e^{i\theta}(z_1)-e^{i\theta}(z_2)|\leq C|z_1-z_2|^\alpha+C|z_1-z_2|^{\alpha-\frac{1}{\alpha\beta}}.
\end{align*}

Thus
\begin{align*}
|Ae^{i\theta}(z_1)-Ae^{i\theta}(z_2)|&\leq |A(z_1)||e^{i\theta}(z_1)-e^{i\theta}(z_2)|+|A(z_1)-A(z_2)||e^{i\theta}(z_2)|\\
&\leq C|z_1-z_2|^\alpha+C|z_1-z_2|^{\alpha-\frac{1}{\alpha\beta}}+C\omega(|z_1-z_2|),
\end{align*}
where $\omega$ is the moduli of continuity of $a$.

Case 2, $|z_1-z_2|\geq |z_1|^\beta$. We have
\begin{align*}
|Ae^{i\theta}(z_1)-Ae^{i\theta}(z_2)|&\leq |A(z_1)||e^{i\theta}(z_1)-e^{i\theta}(z_2)|+|A(z_1)-A(z_2)||e^{i\theta}(z_2)|\\
&\leq C\omega(|z_1-z_2|^{\frac{1}{\beta}})+C\omega(|z_1-z_2|).
\end{align*}

To sum up, we have
\begin{align*}
|Ae^{i\theta}(z_1)-Ae^{i\theta}(z_2)| \leq C|z_1-z_2|^\alpha+C|z_1-z_2|^{\alpha-\frac{1}{\alpha\beta}}+C\omega(|z_1-z_2|^{\frac{1}{\beta}})+C\omega(|z_1-z_2|).
\end{align*}

Let $\omega_0$ be moduli of continuity of $Ae^{i\theta}$ then
\begin{align*}
\omega_0(r)\leq Cr^\alpha+ Cr^{\alpha-\frac{1}{\alpha\beta}}+C\omega(r^{\frac{1}{\beta}})+C\omega(r).
\end{align*}
Since $\omega(r)$ satisfies Dini condition implies that $\omega(r^{\frac{1}{\beta}})$ also satisfies Dini condition, we have $Ae^{i\theta}$ is Dini continuous.

Let $\chi=u+iv$, then we have
\begin{align*}
u_x-v_y+i(v_x+u_y)=Ae^{i\theta}(u_x+v_y+i(u_y-v_x)),
\end{align*}
i.e.
\begin{align*}
u_x-v_y&=A\cos\theta(u_x+v_y)-A\sin\theta(u_y-v_x),\\
v_x+u_y&=A\cos\theta(u_y-v_x)+A\sin\theta(u_x+v_y).
\end{align*}
equivalently
\begin{align*}
(1-A\cos\theta)u_x+A\sin\theta u_y&=A\sin\theta v_x+(1+A\cos\theta)v_y,\\
-A\sin\theta u_x+(1-A\cos\theta)u_y&=-(1+A\cos\theta)v_x+A\sin\theta v_y.
\end{align*}
thus
\begin{align*}
(1+A^2-2A\cos\theta)u_x&=2A\sin\theta v_x+(1-A^2)v_y,\\
(1+A^2-2A\cos\theta)u_y&=-(1-A^2) v_x+2A\sin\theta v_y.
\end{align*}
i.e. 
\begin{align*}
D_i\left(A_{ij}v_j\right)=0,
\end{align*}
where
\begin{align*}
A_{11}=A_{22}=\frac{1-A^2}{1+A^2-2A\cos\theta},\quad A_{12}=-A_{21}=\frac{2A\sin\theta}{1+A^2-2A\cos\theta}.
\end{align*}
Since $Ae^{i\theta}$ is Dini continuous, thus $A_{ij}$ is Dini continuous. On the other hand, $A(z)$ is sufficiently small as $z$ tends to $0$, thus $A_{ij}$ is uniformly elliptic. Since $\chi$ is quasiconformal mapping, $W^{1,2}$ estimate of $v$ follows by classical results (e.g. Lemma 12.1 in \cite{GT}). We can now apply Lemma \ref{Li} to obtain that $v\in C^1_{loc}$.

Similarly,
\begin{align*}
(1+A^2+2A\cos\theta)v_x&=2A\sin\theta u_x-(1-A^2)v_y,\\
(1+A^2+2A\cos\theta)v_y&=(1-A^2)u_x+2A\sin\theta v_y.
\end{align*}
i.e.
\begin{align*}
D_i\left(B_{ij}u_j\right)=0,
\end{align*}
where
\begin{align*}
B_{11}=B_{22}=\frac{1-A^2}{1+A^2+2A\cos\theta},\quad B_{12}=-B_{21}=\frac{2A\sin\theta}{1+A^2+2A\cos\theta}.
\end{align*}
and again $u\in C^1_{loc}$.

Thus we conclude that $\chi\in C^1_{loc}$.

Since $\chi$ is a homeomorphism and $\chi_{\bar{z}}(0)=0$, thus we have
\begin{align*}
\chi=C_0z+o(|z|),\quad \chi_z=C_0+o(1),
\end{align*}
for $C_0\neq 0$.
By the representation, the lemma is now proved.

\end{proof}

\medskip

\noindent
{\it Acknowledgement}: The author would like to express gratitude to his supervisor Professor Pengfei Guan for suggesting the problem, helpful discussions and consistent support, without which the work can never be done. He would also thank Professor Pengzi Miao for kind remark and bringing Lemma \ref{quasi-2} to his attention.


\begin{thebibliography}{9}

\bibitem{AM}
G. Alessandrini and R. Magnanini, 
{\em Elliptic equations in divergence form, geometric critical points of solutions, and Stekloff eigenfunctions},
SIAM J. Math. Anal. 25 (1994), no. 5, 1259-1268.

\bibitem{BN}
L. Bers and L. Nirenberg, 
{\em  On a representation theorem for linear elliptic systems with discontinuous coefficients and its applications},
Convegno Internazionale sulle Equazioni Lineari alle Derivate Parziali, Trieste, 1954, pp. 111-140. Edizioni Cremonese, Roma, 1955. 

\bibitem{BY}
J. D. Brown and J. W. York, 
{\em Quasilocal energy and conserved charges derived from the gravitational action},
Phys. Rev. D (3) 47 (1993), no. 4, 1407-1419.

\bibitem{CX}
J.-E. Chang and L. Xiao,
{\em The Weyl problem with nonnegative Gauss curvature in hyperbolic space},
Canad. J. Math. 67 (2015), no. 1, 107-131.

\bibitem{D}
A. A. Dubrovin, 
{\em  Regularity of an isometric imbedding of a two-dimensional Riemannian manifold into a three-dimensional Riemannian space},
(Russian) Ukrain. Geometr. Sb. Vyp. 2 1966 19-27.

\bibitem{GT}
D. Gilbarg and N. S. Trudinger, 
{\em Elliptic partial differential equations of second order}, 
Reprint of the 1998 edition. Classics in Mathematics. Springer-Verlag, Berlin, 2001. xiv+517 pp. ISBN: 3-540-41160-7 35-02 (35Jxx)

\bibitem{GLi}
P. Guan and J. Li, 
{\em  A mean curvature type flow in space forms},
Int. Math. Res. Not. IMRN 2015, no. 13, 4716-4740. 

\bibitem{GL} 
P. Guan and Y.Y. Li,
{\em On Weyl problem with nonnegavie Gauss curvature}, 
J. Differential Geom., 39(1994), 331-342.

\bibitem{GLu}
P. Guan and S. Lu,
{\em Curvature estimates for immersed hypersurfaces in Riemannian manifolds},
Invent. Math. 208 (2017), no. 1, 191-215.

\bibitem{HW}
P. Hartman and A. Wintner, 
{\em On uniform Dini conditions in the theory of linear partial differential equations of elliptic type}, 
Amer. J. Math. 77 (1955), 329-354. 

\bibitem{H1}
E. Heinz, 
{\em  On elliptic Monge-Amp\`ere equations and Weyl's embedding problem},
J. Analyse Math. 7 1959 1-52.

\bibitem{H3}
E. Heinz, 
{\em  Neue a-priori-Absch\"atzungen f\"ur den Ortsvektor einer Fl\"ache positiver Gaussscher Kr\"ummung durch ihr Linienelement (German)}, 
Math. Z 74 1960 129-157.

\bibitem{H2}
E. Heinz, 
{\em On Weyl's embedding problem}, 
J. Math. Mech. 11 1962 421-454.

\bibitem{H4}
E. Heinz, 
{\em  A-priori-Absch\"atzungen f\"ur isometrische Einbettungen zweidimensionaler Riemannscher Mannigfaltigkeiten in dreidimensionale Riemannsche R\"aume},
(German) Math. Z. 100 1967 1-16. 

\bibitem{HZ}
J. Hong and C. Zuily,
{\em Isometric embedding of the 2-sphere with nonnegative curvature in $R^3$},
Math. Z., 219(1995),323-334.

\bibitem{I} 
J. A. Iaia,
{\em Isometric embeddings of surfaces with nonnegative curvature in $R^3$},
Duke Math. J. , Vol 67, (1992),423-459.

\bibitem{L} 
H. Lewy,
{\em On the existence of a closed convex sucface realizing a given Riemannian metric},
Proceedings of the National Academy of Sciences, U.S.A., Volume 24, No. 2, (1938),  104-106.

\bibitem{GLW} 
C. Li and Z. Wang,
{\em The Weyl problem in warped product space},  
arXiv: 1603.01350.

\bibitem{Li}
Y.Y. Li,
{\em On the $C^1$ regularity of solutions to divergence form elliptic systems with Dini-continuous coefficients}, 
Chin. Ann. Math. Ser. B 38 (2017), no. 2, 489-496.
        
\bibitem{WL}
C.-Y. Lin and Y.-K. Wang,
{\em On Isometric Embeddings into Anti-de Sitter Space-times},
Int. Math. Res. Not. IMRN 2015, no. 16, 7130-7161.

\bibitem{LY1}
C.-C. M. Liu and S.-T. Yau,
{\em  Positivity of quasilocal mass}, 
Phys. Rev.Lett. 90(2003) No. 23, 231102, 4p.

\bibitem{LY2}
C.-C. M. Liu and S.-T. Yau, 
{\em Positivity of quasilocal mass II}, 
J. Amer. Math. Soc. 19 (2006) No. 1. 181-204.

\bibitem{LuM}
S. Lu and P. Miao,
{\em Minimal hypersurfaces and boundary behavior of compact manifolds with nonnegative scalar curvature},
arXiv:1703.08164.

\bibitem{MM}
C. Mantoulidis and P. Miao,
{\em Total mean curvature, scalar curvature, and a variational analog of Brown-York mass},
Comm. Math. Phys. 352 (2017), no. 2, 703-718.

\bibitem{N} 
L. Nirenberg,
{\em The Weyl and Minkowski problems in differential geometry in the large},
Comm. Pure Appl. Math. 6, (1953), 337-394. 

\bibitem{P} 
A. V. Pogorelov,
{\em Some results on surface theory in the large},
Advances in Math. 1(1964),no.2, 191-264.

\bibitem{Pb}
A. V. Pogorelov, 
{\em Extrinsic geometry of convex surfaces},
Translated from the Russian by Israel Program for Scientific Translations. Translations of Mathematical Monographs, Vol. 35. American Mathematical Society, Providence, R.I., 1973. 

\bibitem{S}
F. Schulz, 
{\em  Regularity theory for quasilinear elliptic systems and Monge-Amp\`ere equations in two dimensions}, Lecture Notes in Mathematics, 1445. Springer-Verlag, Berlin, 1990. xvi+123 pp. ISBN: 3-540-53103-3 

\bibitem{S1}
F. Schulz, 
{\em  On the unique continuation property of elliptic divergence form equations in the plane},
Math. Z. 228 (1998), no. 2, 201-206. 

\bibitem{S2}
F. Schulz,
{\em Regularity of locally convex surfaces},
Bull. Austral. Math. Soc. 42 (1990), no. 3, 487-497.

\bibitem{ST2}
Y. Shi and L.-F. Tam,
{\em Positive mass theorem and the boundary behaviors of compact manifolds with nonnegative scalar curvature}, 
J. Differential Geom. 62 (2002), no. 1, 79-125.

\bibitem{ST}
Y. Shi and L.-F. Tam,
{\em Rigidity of compact manifolds and positivity of quasi-local mass},
Classical Quantum Gravity 24 (2007), no. 9, 2357-2366.

\bibitem{WY1}
M.-T. Wang and S.-T. Yau. 
{\em A generalization of Liu-Yau's quasi-local mass}, 
Comm. Anal. Geom. 15 (2007), no. 2, 249-282.

\bibitem{WY} 
M.-T. Wang and S.-T. Yau,
{\em Isometric embeddings into the Minkowski space and new quasi-local mass},
Comm. Math. Phys. 288 (2009), no. 3, 919-942.

\bibitem{W} 
H. Weyl,
{\em Uber die Bestimmung einer geschlossenen konvexen Flache durch ihr Linienelement},
Vierteljahrsschrift der naturforschenden Gesellschaft, Zurich, 61, (1916),  40-72.

\end{thebibliography}
\end{document}